\newcommand{\cal}{\mathcal}
\newcommand{\R}{\mathbb R}
\newcommand{\Z}{\mathbb Z}
\newcommand{\per}{\text{per}}
\renewcommand{\epsilon}{\varepsilon}
\renewcommand{\phi}{\varphi}
\DeclareMathOperator{\card}{card}
\DeclareMathOperator{\clos}{clos}
\DeclareMathOperator{\interior}{int}
\DeclareMathOperator{\dist}{dist}
\DeclareMathOperator{\W}{W}
\DeclareMathOperator{\qu}{\cal Q} 
\theoremstyle{plain}
\newtheorem{teo}{Theorem}
\newtheorem*{teo*}{Theorem}
\newtheorem{lema}{Lemma}
\newtheorem{prop}{Proposition}
\theoremstyle{definition}
\newtheorem{df}{Definition}
\theoremstyle{remark}
\newtheorem{obs}{Remark}
\begin{document}
\title{Low dimensional bifurcations of snap-back repellors}
\author[A. Artigue]{Alfonso Artigue}
\address{DMEL, Regional Norte, Universidad de la Rep\'ublica, Uruguay}
\email{artigue@unorte.edu.uy}

\begin{abstract} 
We study the relationship between homoclinic orbits associated to repellors, usually called snap-back repellors, and expanding sets 
of smooth endomorphisms. 
Critical homoclinic orbits 
constitutes an interesting bifurcation that is locally 
contained in the boundary of the set of maps having homoclinic orbits. 
This and other possible routes to the creation of homoclinic orbits are considered in low dimensions. 
\end{abstract}

\maketitle

\begin{section}{Introduction}


\par Given a Riemannian manifold $M$, and a positive integer $r$, 
denote by $C^r(M)$ the set of class $C^r$ maps of $M$. 
For $f\in C^r(M)$, a periodic point $x$ of period $k$ is \emph{expanding}, also called a \emph{repellor}, 
if the differential $Df^k_x$ is a linear expanding map. 
In this case, the periodic orbit $\gamma=\{f^j(x): 0\leq j\leq k-1\}$ is also called a repellor. 
A bi-infinite sequence $\{x_n: n\in \Z\}$ is called \emph{an orbit} of 
$f$ if $f(x_{n+1})=x_n$ for every $n\in\Z$. 
An orbit $\{x_n: n\in \Z\}$ is said \emph{homoclinic} to a repellor $\gamma$ if $x_n\to \gamma$ as $n\to\pm\infty$. The only possibility for the existence of a homoclinic orbit associated to a repellor $\gamma$ is that there exists $N\in\Z$ such that $x_N\in\gamma$. Thus this phenomenon is exclusive of noninvertible maps. Note also that any $\{x_n\}$ in the homoclinic orbit is a chain recurrent point, so it is reasonable to expect that under this hypothesis the map has nontrivial dynamics.
    
\par A point $x$ is \emph{singular} or \emph{critical} if $Df_x$ is noninvertible. 
The set of singular points of $f$ will be denoted by $S_f$. 
The homoclinic orbit $\{x_n\}$ is called \emph{regular} if no $x_n$ belongs to $S_f$; otherwise, the orbit is called \emph{critical}. 
The concept of homoclinic orbit associated to a repellor was introduced by Marotto in 1978, 
(see ~\cite{Ma}); he used the name \emph{snap-back repellor} for a homoclinic orbit, and proved that the existence 
of a homoclinic orbit implies chaos in the sense of Devaney; 
in particular, the map has infinitely many periodic orbits. Later, 
Gardini ~\cite{G} gave some examples, studying the bifurcations created by perturbing such an orbit. 
Also Mora \cites{Mo98,Mo}, considered this concept, and found the limits of renormalizations in a neighborhood of such an orbit.
  
Here it will be discussed to what extent this definition of regular 
and critical homoclinic orbits extends the concept of transverse and tangential 
homoclinic orbits for diffeomorphisms. 
Let us now give precise statements of our results.

\par Let $f\in C^1(M)$ have an expanding periodic orbit $\gamma$ of period $k$ and $\{x_n\}$ a regular homoclinic orbit 
associated to $\gamma$. Such a homoclinic orbit gives rise to an expanding set of $f$, 
just as for the case of diffeomorfisms, a transverse homoclinic orbit gives rise to a horseshoe. 
The following result is well known: 

\begin{teo*}
Given any neighborhood $U$ of a periodic repellor $\gamma$ with a regular homoclinic orbit associated there exists a positive integer $m$ and a Cantor set $K\subset U$ such that $K$ is $f^m$ invariant and the restriction of $f^m$ to $K$ is expanding. The dynamics of $f^m$ in $K$ is conjugated to a unilateral shift. 
\end{teo*}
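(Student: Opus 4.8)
The plan is to produce, for a suitable iterate $f^N$, a noninvertible horseshoe: two disjoint ``full branches'' over a fixed base set $\Delta$, one branch coming from the repelling behaviour near $\gamma$ and the other from travelling once along the homoclinic orbit and then expanding back. I would first reduce to the case of a fixed point. Replacing $f$ by $f^k$ turns $\gamma$ into a fixed expanding point $p$ (each point of a periodic repellor is fixed by $f^k$ and its return derivative is conjugate to, hence as expanding as, $Df^k$ at any other point of $\gamma$), and a regular homoclinic orbit $\{x_n\}$ of $f$ with $x_N\in\gamma$ produces the sequence $y_j:=x_{N+jk}$, $j\ge 0$, with $f^k(y_{j+1})=y_j$, which is a regular homoclinic orbit of $f^k$ associated to $p$ (the tail of $\{x_n\}$ sampled modulo $k$ converges to a single point of $\gamma$, which one checks equals $p$, and $D(f^k)_{y_j}$ is a composition of maps $Df_{x_n}$ with $x_n\notin S_f$, hence invertible). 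Since applying the statement to $f^k$ yields the conclusion for $f^{kN}$, it suffices to treat the case $\gamma=\{p\}$ fixed and expanding; I also assume the homoclinic orbit is not contained in $\{p\}$, so that $y_j\ne p$ for $j\ge 1$.

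For the local picture: by the standard adapted-metric argument there are $r>0$, $\lambda>1$ and a metric equivalent to the Riemannian one such that, on $\Delta:=\overline{B_r(p)}$, $f$ is injective, $Df$ is invertible, $d(f(u),f(v))\ge\lambda\,d(u,v)$ and $f(\Delta)\supseteq\Delta$; shrinking $r$, we may take $\Delta\subseteq U$. Let $\phi:\Delta\to\Delta$ be the inverse branch of $f$ with $\phi(p)=p$; it contracts by at most $\lambda^{-1}$, and $f^n\circ\phi^n=\mathrm{id}_\Delta$ with $\phi^n(\Delta)\subseteq B_{r\lambda^{-n}}(p)$. Now choose $j_0$ so large that $z_0:=y_{j_0}$ lies in $B_{r/2}(p)\subseteq U$ with $z_0\ne p$, and put $m_0:=j_0$, so $f^{m_0}(z_0)=p$ with $Df^{m_0}_{z_0}$ invertible by regularity. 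By the inverse function theorem there is a $C^1$ inverse branch $\theta:\overline{B_\tau(p)}\to U_{z_0}$ of $f^{m_0}$ with $\theta(p)=z_0$, where $U_{z_0}$ is a neighbourhood of $z_0$ contained in $\Delta$ and, after shrinking $\tau$, disjoint from a fixed ball $B_\varepsilon(p)$; invertibility of $Df^{m_0}_{z_0}$ gives a constant $C<\infty$ with $d(f^{m_0}(u),f^{m_0}(v))\ge C^{-1}d(u,v)$ for $u,v$ near $z_0$.

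Now I build the two branches. Pick $n$ large enough that $r\lambda^{-n}<\tau$, that $\lambda^{n}>C$, and that $r\lambda^{-(m_0+n)}<\varepsilon$; set $N:=m_0+n$ and $V_0:=\phi^{n}(\Delta)\subseteq B_\tau(p)$, so that $f^{n}|_{V_0}:V_0\to\Delta$ is a diffeomorphism expanding by at least $\lambda^{n}$. Put $\Delta_1:=\phi^{N}(\Delta)$ and $\Delta_0:=\theta(V_0)$. These are disjoint compact subsets of $\Delta$, since $\Delta_1\subseteq B_{r\lambda^{-N}}(p)\subseteq B_\varepsilon(p)$ and $\Delta_0\subseteq U_{z_0}$, so in fact $\dist(\Delta_0,\Delta_1)\ge\varepsilon-r\lambda^{-N}>0$; moreover $f^{N}|_{\Delta_1}=(\phi^{N})^{-1}$ is a diffeomorphism onto $\Delta$ expanding by at least $\lambda^{N}$, and $f^{N}|_{\Delta_0}=(f^{n}|_{V_0})\circ(\theta^{-1}|_{\Delta_0})$ is a diffeomorphism onto $\Delta$ expanding by at least $\lambda^{n}/C>1$, because the large expansion $\lambda^{n}$ produced by the repellor dominates the bounded (possibly contracting) factor $\theta^{-1}=f^{m_0}|_{\Delta_0}$ coming from the homoclinic travel time. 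This domination is the crux of the argument and explains why a genuine repellor, and a large exponent $N$, are needed; it is also what makes the construction land inside the prescribed $U$, since $z_0\in U$ is fixed first and only afterwards is $n$ sent to infinity.

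Finally, $\Delta_0,\Delta_1$ are a pair of disjoint full branches of $f^{N}$ over $\Delta$, each with a uniformly contracting inverse branch $\Phi_i:=(f^{N}|_{\Delta_i})^{-1}:\Delta\to\Delta_i$, and I finish with the standard coding argument. Set
\[
K:=\{\,x\in\Delta:\ (f^{N})^{j}(x)\in\Delta_0\cup\Delta_1\ \text{for all}\ j\ge 0\,\},
\]
so that $f^{N}(K)=K$ and $K\subseteq\Delta_0\cup\Delta_1\subseteq U$. The nested compact sets $\Phi_{a_0}\cdots\Phi_{a_j}(\Delta)$, $a\in\{0,1\}^{\N}$, have diameters tending to $0$, so the itinerary map $h:K\to\{0,1\}^{\N}$ is a well-defined bijection, continuous from a compact space, hence a homeomorphism, and satisfies $h\circ f^{N}=\sigma\circ h$ with $\sigma$ the one-sided shift; thus $K$ is a Cantor set, the dynamics of $f^{N}$ on $K$ is conjugate to a unilateral shift, and $f^{N}|_{K}$ is expanding, the positive number $\dist(\Delta_0,\Delta_1)$ furnishing a scale below which any two points of $K$ lie in the same $\Delta_i$ and are therefore separated by the factor $\min(\lambda^{n}/C,\lambda^{N})>1$. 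Undoing the reduction, $m:=kN$ works for the original $f$. The main obstacle is the third paragraph — obtaining a single iterate whose homoclinic branch is uniformly expanding jointly with the repelling branch while all of $K$ stays in $U$ — which dictates the order of the choices; the adapted-metric facts used in the second paragraph and the coding argument in the last are routine.
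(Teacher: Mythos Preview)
The paper does not supply its own proof of this statement; it is recorded as a well-known result with a pointer to \cite{Ar}, \cite{G}, \cite{Rot}.  Your argument is exactly the standard one found in those sources: over a small ball $\Delta$ near the repellor, manufacture two disjoint full inverse branches of a high iterate $f^{m}$, one coming from the local contraction $\phi$ and one from the homoclinic excursion $\theta$ padded with enough iterates of $\phi$ so that the expansion $\lambda^{n}$ dominates the distortion constant $C$ of $\theta^{-1}$, and then code by itineraries.  Your emphasis on the order of choices (fix $z_{0}$ and $\varepsilon$ first, only then send $n\to\infty$) is the right one, and the coding paragraph is routine.

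One small wrinkle in your reduction deserves a comment.  Passing to $f^{k}$ turns $\gamma$ into $k$ distinct expanding fixed points, and the sampled sequence $y_{j}=x_{N+jk}$ need not satisfy $\lim_{j}y_{j}=y_{0}$: a phase shift between the index $N$ and the asymptotic phase of the preorbit can send the limit to a different point of $\gamma$ (this already occurs for $z\mapsto z^{2}$ on $S^{1}$ and its period-two orbit, where a nontrivial preorbit of $\omega$ may converge to $\omega^{2}$).  So the phrase ``which one checks equals $p$'' is not quite right.  This does no real damage to the construction: set $p:=\lim_{j}y_{j}$, keep $z_{0}:=y_{j_{0}}$ close to $p$, and note that $f^{M}(z_{0})=p$ for $M:=kj_{0}+s$ with $s\in\{0,\dots,k-1\}$ determined by $f^{s}(x_{N})=p$; then run your third and fourth paragraphs for the original $f$ with this $M$ in place of $m_{0}$ and $m:=M+kn$.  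When $s\neq 0$ the ``pure repellor'' branch $\phi^{N}$ is no longer an inverse branch of $f^{m}$, but $\Phi_{1}:=\phi^{n}\circ\theta$ is, lands near $\phi^{n}(z_{0})$ (hence near $p$ and disjoint from $\Phi_{0}(\Delta)$), and contracts by the same factor $C\lambda^{-n}$.  Everything else is unchanged.
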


See for example \cites{Ar,G,Rot} for a proof. 
Recall that $K$ is said to be an $f$-\emph{expanding} set if the following conditions hold: 
\begin{enumerate}
\item there exist constants $C>0$ and $\lambda>1$ such that $$ ||Df_x^n(v)||\geq C\lambda^n||v||, $$ for every vector $v$ tangent to $M$ at $x$ and every $n>0$,
\item $K$ is isolated: there exists $U$ a neighborhood of $K$ such that $$K=\cap_{n\geq 0} f^{-n}(U).$$
\end{enumerate}

It is interesting to question to what extent this explains the dynamics 
in a neighborhood of a homoclinic orbit. 
To give a more accurate description of the meaning of this and the following result, 
define, in analogy with the invertible case, the \emph{homoclinic class} 
of a periodic expanding orbit $\gamma$ as the closure 
of the intersection of the set of preorbits of $\gamma$ with the unstable 
set of $\gamma$. A \emph{preorbit} of a point $x_0$ is a sequence $\{x_n\}_{n\geq0}$ such 
that $f(x_{n+1})=x_n$. A preorbit of a cycle is a preorbit of a point of the cycle. 
The \emph{unstable set} of the repellor $\gamma$ is defined as the set of points $W^u_f(\gamma)$ having a preorbit that converges to $\gamma$. The homoclinic class of $\gamma$ under the map $f$ will be denoted by $H_\gamma(f)$.
	
\par As an illustrative example, consider the 
map $z\to z^2$ acting on the circle $S^1$. This 
map has a fixed expanding point at $z_0=1$ whose 
homoclinic class is the whole circle. Beginning 
with a particular homoclinic orbit associated to
 $z_0$, one obtains a Cantor expanding set contained 
in the homoclinic class. So the following question
gain in interest: given an 
expanding set $K$, is it true that it coincides with the 
homoclinic class of an expanding repellor? the answer is 
affirmative if one assures that the set $K$ is 
indecomposable in a dynamical sense, for example, if it is 
transitive (i.e. there is a point whose future orbit is dense).

\begin{teo}\label{class} 
Every uncountable expanding set $K$ has homoclinic orbits associated to periodic repellors. 
If in addition $K$ is transitive then
$$K=\clos(\cup_{n\geq 0}f^{-n}(\gamma)\cap W^u(\gamma)\cap K)$$ 
for every periodic orbit $\gamma\subset K$. 
In particular $K$ is contained in the homoclinic class of $\gamma$. \end{teo}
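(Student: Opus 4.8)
The plan is to derive both assertions from one shadowing construction, using three standard features of an expanding set $K$: it has the shadowing property (every sufficiently fine pseudo-orbit is $\epsilon$-shadowed, uniquely if $\epsilon$ lies below an expansivity constant), the restriction $f|_K$ is locally injective and hence, by compactness, finite-to-one, and every periodic orbit $\gamma\subset K$ is a repellor, so that for a small neighbourhood $V$ of $\gamma$ a point whose forward orbit stays in $V$ must lie on $\gamma$, while a preorbit that stays in $V$ converges to $\gamma$. A routine addendum, proved essentially as the shadowing lemma itself, is that a bi-infinite $\delta$-pseudo-orbit staying near $K$ is shadowed by a genuine bi-infinite orbit of $f$ contained in $K$: one shadows the forward half and extends it backwards using the local invertibility of $f$ near $K$ together with the isolation of $K$. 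See \cite{Ar} for these facts.

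\emph{First assertion.} I first claim that some chain recurrence class of $f|_K$ is not a single periodic orbit. If every class were, then for each $x\in K$ the set $\omega(x)$---nonempty by compactness, internally chain transitive, and contained in the nonwandering hence chain recurrent set---would lie in one class and therefore equal a periodic orbit $\gamma_x$; since $\gamma_x$ is a repellor, the forward orbit of $x$ eventually staying near $\gamma_x$ forces $f^n(x)\in\gamma_x$ for some $n$. Then $K$ would be the union, over the periodic orbits of $f|_K$ (countably many, since each $f^n|_K$ has finitely many fixed points by expansion and compactness) and over $n\ge 0$, of the finite sets $f^{-n}(\gamma)\cap K$, so $K$ would be countable, contrary to hypothesis. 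Hence there is an infinite chain transitive class $\mathcal C\subset K$, and, applying the closing lemma at a point of $\mathcal C$ and again at a point of $\mathcal C$ lying off the resulting periodic orbit---the shadowing periodic orbits are chain connected to those points, hence belong to $\mathcal C$---one obtains two distinct periodic orbits $\gamma_1\ne\gamma_2$ in $\mathcal C$. Now fix $p\in\gamma_1$ and build a bi-infinite $\delta$-pseudo-orbit that runs through $\gamma_1$ for all sufficiently negative times, follows a $\delta$-chain from $\gamma_1$ to $\gamma_2$, stays near $\gamma_2$ for a finite stretch, follows a $\delta$-chain from $\gamma_2$ back to $\gamma_1$, and then runs through $\gamma_1$ for all sufficiently positive times. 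Shadowing it by a genuine orbit $\{x_n\}\subset K$, with $\epsilon$ small enough that the $\epsilon$-neighbourhoods of $\gamma_1$ and $\gamma_2$ are disjoint: for very negative $n$ the forward orbit of $x_n$ stays $\epsilon$-near $\gamma_1$, so $x_n\in\gamma_1$; for very positive $n$ the preorbit $\{x_n\}$ stays $\epsilon$-near $\gamma_1$, hence converges to $\gamma_1$; and at the excursion some $x_n$ is $\epsilon$-near $\gamma_2$, so is not on $\gamma_1$. Thus $\{x_n\}$ is a nontrivial homoclinic orbit associated to the periodic repellor $\gamma_1$.

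\emph{Second assertion.} Suppose $K$ is transitive; then it is chain transitive. Given a periodic orbit $\gamma\subset K$, a point $x\in K$, $\epsilon>0$, and $p\in\gamma$, concatenate a $\delta$-chain from $p$ to $x$ with one from $x$ back to $p$, and prefix and suffix the periodic orbit of $p$, obtaining a bi-infinite $\delta$-pseudo-orbit that equals $\gamma$ for all large $|n|$ and passes within $\delta$ of $x$ at time $0$. A shadowing orbit $\{w_n\}\subset K$ then satisfies: $w_n\in\gamma$ for all sufficiently negative $n$ (repellor), so $f^N(w_0)\in\gamma$ for some $N\ge 0$; the preorbit $\{w_n\}_{n\ge 0}$ eventually stays near $\gamma$ and hence converges to it, so $w_0\in W^u(\gamma)$; and $d(w_0,x)<\epsilon+\delta$. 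Hence $\bigcup_{n\ge 0}f^{-n}(\gamma)\cap W^u(\gamma)\cap K$ is dense in $K$, and, being contained in the closed set $K$, its closure equals $K$, which is the displayed identity. Finally $H_\gamma(f)$ is by definition the closure of $\bigcup_{n\ge 0}f^{-n}(\gamma)\cap W^u_f(\gamma)$, which contains the closure of the smaller set just shown to equal $K$; hence $K\subset H_\gamma(f)$.

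\emph{Where the work is.} The substantive step is the first one: extracting from mere uncountability the existence of two periodic orbits inside a common chain class, via the finiteness of iterated preimages of $f|_K$ together with the repelling behaviour near periodic orbits; and, having these, ensuring that the homoclinic orbit produced by shadowing is genuinely nontrivial---which is exactly why the construction detours through a second periodic orbit rather than simply closing up a recurrence of $\gamma_1$. The bi-infinite shadowing and the verification that the shadowing orbits remain in $K$ are routine consequences of the properties of expanding sets listed at the outset.
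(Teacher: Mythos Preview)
Your argument is correct, and it follows a genuinely different route from the paper's.

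For the first assertion, the paper does not use shadowing or chain recurrence at all. It quotes Craizer's structure result $K=\bigcup_{n\ge0}f^{-n}(\clos\per(f))$ to deduce at once that $K$ has infinitely many periodic points; then, using only the existence of uniform contracting inverse branches on balls of radius $r$, it observes that any two periodic points $p,q$ with $\dist(p,q)<r$ satisfy $W^u(p)=W^u(q)$, and from two such points it builds the homoclinic orbit by hand, chasing preorbits back and forth between $p$ and $q$. For the second assertion the paper again avoids shadowing: it shows directly, from openness of $f|_K$ and the existence of a dense forward orbit, that $W^u(p)=K$ and that $\bigcup_{n\ge0}f^{-n}(p)$ is dense for every $p\in K$.

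Your approach trades these explicit constructions for the shadowing/closing machinery. This buys you a single uniform technique for both halves of the theorem and a clean reason (the counting argument via finite-to-one and countably many periodic orbits) for why uncountability forces a nontrivial chain class; it also makes transparent why the homoclinic orbit is nontrivial, via the detour through $\gamma_2$. The paper's approach, on the other hand, is more self-contained: it needs only the inverse-branch structure and no shadowing lemma, and it extracts the key geometric fact $W^u(p)=W^u(q)$ for nearby periodic points, which is of some independent interest. One small caution: your bi-infinite shadowing addendum and the assertion that shadowing orbits stay in $K$ do require the isolation of $K$, which you invoke; this is exactly where the paper's inverse-branch description (the maps $\varphi\colon B_r(x)\to K$) does the same work more directly.
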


\par The next problem under study will be the bifurcation a critical homoclinic orbit produces. 

\begin{df} A critical point $c$ of $f\in C^2(M)$ is called a 
\emph{fold type point} if $f$ is locally equivalent to the Whitney cannonical 
form $\qu(a_1,a_2,\ldots,a_d)=(a_1,a_2,\ldots,a_d^2)$, 
where $d$ is the dimension of $M$. This means that there exist neighborhoods 
$U$ of $c$ and $V$ of $f(c)$, and diffeomorphisms 
$\varphi:U\to \R^d$ and $\psi:V\to \R^d$ such that $\psi f=\qu\varphi$ and $\varphi(c)=0$, $\psi(f(c))=0$. 
\end{df}

\par To see more on this subject the reader can consult the text \cite{GG}. Locally at a fold type point $c$, the map $f$ is of type $(2,0)$, meaning that $f(S_f)$ disconnects the neighborhood $V$ of $f(c)$ in such a way that each point in one of the components of the complement of $S_f$ in $V$ has two preimages in $U$ and each point in the other has no preimages in $U$. It is known that there exists an open and dense set $\mathcal G$ of maps in $C^3(M)$ such that, for every $g\in \mathcal G$, the set of fold type points is open and dense in $S_g$.

\par Assume that $f$ is a mapping having a repellor $\gamma$ with a homoclinic orbit $\{x_n\}$ such that $x_0$ is a critical point of fold type of $f$, and every other $x_n$ is regular. This plays the role of a quadratic homoclinic tangency in the context of diffeomorphisms. First it is shown that this constitutes a codimension one phenomenon:

\begin{teo}\label{c+c-} Given any small open set $U$ containing $\gamma\cup\{x_n\}$, there exists a $C^2$ neighborhood $\mathcal U$ of $f$ and a codimension one submanifold $\mathcal S\subset \mathcal U$ such that $\mathcal U\setminus\mathcal S$  is the union of two open sets $\mathcal U_\pm$ such that:
\begin{enumerate} \item Every $g\in \mathcal U_+$ has a regular homoclinic orbit associated to a repellor $\gamma_g$, both contained in $U$.
\item Every $g\in \mathcal U_-$ has no homoclinic orbit contained in $U$.
\item Every map in $\mathcal S$ has a critical homoclinic orbit contained in $U$.
\end{enumerate} 
Furthermore, every map in $\mathcal S$ can be approximated by maps in $\mathcal U$ that have a critical periodic point whose future orbit is contained in $U$.
\end{teo}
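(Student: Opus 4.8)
The plan is to reduce the bifurcation to the study of a single scalar function on a $C^2$ neighbourhood $\mathcal U$ of $f$, measuring how the forward image of the fold value sits relative to the continuation of $\gamma$. Reindexing so that $x_0$ is the fold point, let $J\ge 1$ be minimal with $x_{-J}\in\gamma$; such a $J$ exists because a forward orbit cannot approach a repellor without eventually lying on it, so $x_n\in\gamma$ precisely for $n\le -J$, while the orbit lies off $\gamma$ for all $n\ge -J+1$. Put $p_0:=x_{-J}$; as $\gamma$ is a hyperbolic expanding orbit of period $k$, it has a $C^1$ continuation $\gamma_g\ni p_0^g$ for $g\in\mathcal U$. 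Since $x_1,x_{-1},\dots,x_{-J+1}$ are all regular points, $G:=f^{J+1}$ restricted near $x_1$ is a composition of local diffeomorphisms with the fold of $f$ at $x_0=f(x_1)$, hence is itself a fold map with fold point $x_1$ and $G(x_1)=p_0$. Because being a fold is $C^2$ open, $G_g:=g^{J+1}$ has a fold point $x_1^g$ near $x_1$, critical value $v_g:=G_g(x_1^g)$ near $p_0$ and critical image hypersurface $\Sigma_g$ near $G(S_G)$, all depending $C^1$ on $g$. Fixing a small ball $B_1$ about $x_1$, in a Whitney chart $G_g(B_1)$ is a half-neighbourhood bounded by $\Sigma_g$: points on the far side have two regular $G_g$-preimages in $B_1$, points on the near side none.

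Let $\Delta\colon\mathcal U\to\R$ be the last coordinate of $p_0^g$ in that Whitney chart, a signed distance from $p_0^g$ to $\Sigma_g$, positive on the two-preimage side; then $\Delta$ is $C^1$ and $\Delta(f)=0$. The core of the proof is the following chain of equivalences, valid once $U$ and $\mathcal U$ are small enough. A $g$-orbit contained in $U$ is homoclinic to a repellor in $U$ if and only if it converges to $\gamma_g$ in both directions; its forward tail then lies on $\gamma_g$, its preorbit tail converges to $\gamma_g$, and between them the orbit makes a single excursion which, by a compactness and shadowing argument, $\epsilon$-shadows $x_{-J},x_{-J+1},x_{-J+2},\dots$, since among orbit segments of $f$ inside $\gamma\cup\{x_n\}$ this is the only one leaving $\gamma$; reading off the shadowing point near $x_1$ produces $z\in B_1$ with $G_g(z)=p_0^g$, and conversely such a $z$ reconstructs a homoclinic orbit in $U$. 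Hence a homoclinic orbit in $U$ exists iff $p_0^g\in G_g(B_1)$, i.e.\ iff $\Delta(g)\ge 0$. If $\Delta(g)>0$ there are two such $z\notin S_{G_g}$, and unwinding $z\notin S_{G_g}$ shows the whole orbit is regular (the remaining points lie near $\gamma_g$ or near the regular $x_n$, where $g$ has no critical points); if $\Delta(g)=0$ only $z=x_1^g$ remains, a critical homoclinic orbit. So $\mathcal S:=\Delta^{-1}(0)$, $\mathcal U_+:=\Delta^{-1}((0,\infty))$, $\mathcal U_-:=\Delta^{-1}((-\infty,0))$ satisfy conclusions (1)--(3); and $\mathcal S$ is a codimension one submanifold because a $C^2$-small bump supported in a small ball around $x_{-1}$ translates the image in the normal direction of $f(S_f)$, moving $\Sigma_f$ off $p_0$ at nonzero speed while leaving $\gamma$ and $x_1$ fixed, so $0$ is a regular value of $\Delta$.

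For the final statement, fix $g\in\mathcal S$ with critical homoclinic orbit $\{x_n^g\}$, fold point $c_g=x_0^g$ satisfying $g^J(c_g)=p_0^g\in\gamma_g$, and preorbit $x_1^g,x_2^g,\dots\to\gamma_g$; choose $M$ large so that $x_M^g$ lies in a prescribed small neighbourhood $W_0$ of $\gamma_g$, noting $g^M(x_M^g)=c_g$ with $g^M$ a local diffeomorphism near $x_M^g$. The idea is to perturb $g$ only inside a small ball $B$ about $c_g$ so as to reroute the fold value onto this snap-back branch. Since $g(c_g)$ has forward orbit reaching $\gamma_g$, the forward images of a small ball $B_v$ about $g(c_g)$ expand to contain $W_0$, say $g^N(B_v)\supseteq W_0$ for large $N$, whence $g^{N+M}(B_v)$ contains a neighbourhood of $g^M(x_M^g)=c_g$; choose $\tilde c\in S_g\cap g^{N+M}(B_v)$ close to $c_g$, then $\tilde v\in B_v$ with $g^N(\tilde v)$ equal to the preimage of $\tilde c$ near $x_M^g$ (so $g^{N+M}(\tilde v)=\tilde c$), and a $C^2$-small $\eta$ supported in $B$ with $\eta(\tilde c)=\tilde v-g(\tilde c)$ and $D\eta_{\tilde c}=0$. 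Then $\tilde g:=g+\eta\in\mathcal U$, $\tilde c\in S_{\tilde g}$, $\tilde g(\tilde c)=\tilde v$, and $\tilde g^{\,N+M+1}(\tilde c)=g^{N+M}(\tilde v)=\tilde c$, so $\tilde c$ is a critical periodic point of $\tilde g$ whose orbit shadows $\gamma_g\cup\{x_n^g\}$ and therefore lies in $U$; letting $M\to\infty$ and $\eta\to 0$ exhibits $g$ as a limit of such maps.

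The step I expect to be the main obstacle is the shadowing reduction in the second paragraph: proving that for $U$ and $\mathcal U$ small every homoclinic orbit of $g$ inside $U$ makes exactly one excursion, combinatorially identical to that of $\{x_n\}$, so that the purely local fold analysis near $B_1$ controls all homoclinic orbits in $U$ — it is this, rather than a mere local statement, that makes $\mathcal U_-$ genuinely homoclinic-free, and it is also what forces the periodic orbit constructed in the last paragraph into $U$. The remaining ingredients are comparatively soft: the continuations $\gamma_g$, $x_1^g$ and $\Sigma_g$ come from standard hyperbolic persistence and the $C^2$-stability of folds, and the perturbations in the last paragraph require only elementary control of the $C^2$-norm of a bump against the radius of its support.
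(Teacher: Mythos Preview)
Your argument for the trichotomy (items (1)--(3)) is essentially the paper's: both reduce to the sign of a scalar functional measuring the position of the continuation of the periodic point relative to the fold image hypersurface, and both obtain the codimension-one submanifold by checking that a bump perturbation makes the differential of this functional surjective. The only cosmetic difference is that you first compose $f^{J+1}$ so the fold value lands directly on $\gamma$, whereas the paper keeps the single fold at $x_0$ and handles the regular segment $x_0\to p$ separately; and you are more explicit than the paper about the shadowing step needed to upgrade ``no homoclinic orbit close to $\{x_n\}$'' to ``no homoclinic orbit in $U$''.

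The final assertion, however, is proved by a genuinely different mechanism. You build the critical periodic orbit by hand: pick a fold point $\tilde c$ near $c_g$, locate a point $\tilde v$ near $g(c_g)$ whose $g$-forward orbit snaps back exactly to $\tilde c$, and then add a $C^2$-small bump supported near $c_g$ forcing $\tilde g(\tilde c)=\tilde v$ while keeping $D\tilde g_{\tilde c}$ singular; the rest of the orbit is untouched, so $\tilde c$ becomes periodic and critical. The paper instead takes the explicit one-parameter family $f_\mu$ transverse to $\mathcal S$ already used in the submersion lemma, parametrises fold points by $(w,\mu)\in\R^{d-1}\times\R$ via the graph map $G(w,\mu)=(w,R_{f_\mu}(w))$, and applies Brouwer's fixed point theorem to the continuous self-map
\[
H(w,\mu)=G^{-1}\circ\phi\circ (f_\mu^m|_M)^{-1}\circ c(w,\mu)
\]
of a small box $K\times[-\epsilon,\epsilon]$; a fixed point $(z,u)$ yields a fold-type periodic point of $f_u$ with $|u|\le\epsilon$. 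Your construction is more elementary (no fixed-point theorem) and gives finer control over where the critical periodic orbit sits, but it requires checking that the long $g$-orbit of $\tilde v$ avoids the support of the bump and that the $C^2$-norm of $\eta$ can be made small against a fixed support radius. The paper's Brouwer argument is more topological and has the extra feature that the approximating maps lie on the specific curve $\mu\mapsto f_\mu$ transverse to $\mathcal S$, not merely somewhere in $\mathcal U$.
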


\par This provides a primary explanation of the local bifurcation a (generic) critical homoclinic orbit produces. The problem of understanding globally the transition to the creation of a homoclinic orbit is harder and interesting. The simplest and best known example of such a homoclinic tangency is given by the map $x\to 4x(1-x)$ in the interval. It is the minimum parameter $\mu$ for which the family $f_\mu(x)=4\mu x(1-x)$ presents a homoclinic orbit associated to the fixed point $x=0$. We begin the discussion with another intuitive definition.

\par Another way one can define the unstable set of $\gamma$ is taking the union of the future images of a small neighborhood $U$ of $\gamma$ determined by the condition: $\cap_{n\leq 0}f^n(U)=\gamma$. 
It is clear that there exists a homoclinic orbit associated 
to $\gamma$ if and only if 
$(f^{-1}(\gamma)\setminus\gamma)\cap \W^u_f(\gamma)\neq\emptyset$.

\par Now suppose that $f_\mu$ is a continuous one parameter family of maps in $C^1(M)$. Assume that for every $\mu\in[0,1]$ the map $f_\mu$ has a fixed repellor at a point $p\in M$. Suppose also that $f_0$ has at least one regular homoclinic orbit associated to $p$ and that $f_1$ has no homoclinic orbit associated to $p$. Under these hypothesis, define $\mu_0$ as the supremum of the parameters $\mu$ such that $f_\mu$ has a regular homoclinic orbit associated to $p$. 

\par In the one dimensional case, $M=\R$, the unstable set of a fixed repellor is an interval. Let $W$ be the transformation that assigns, to each $\mu\in [0,1]$, the closure of the unstable set of $p$ as a point in the space of closed intervals in $\overline\R=\{-\infty\}\cup\R\cup\{+\infty\}$ considered with its natural topology of $\overline{\R}^2$.

\begin{teo}\label{lastoh} Let $f_\mu:\R\to\R$ a family as above. Then $f_{\mu_0}$ verifies at least one of the following conditions:
\begin{enumerate} \item The transformation $W$ is discontinuous at $\mu_0$, the bifurcation parameter defined above.
\item The map $f_{\mu_0}$ has at least one critical homoclinic orbit associated to $p$, and does not have any regular homoclinic orbit associated to $p$. 
\end{enumerate} 
\end{teo}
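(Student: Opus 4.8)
The plan is to establish the contrapositive of the dichotomy: if $W$ is continuous at $\mu_0$, then $f_{\mu_0}$ has a critical homoclinic orbit associated to $p$ and no regular one. The ``no regular one'' part is free. A regular homoclinic orbit $\{x_n\}$ of $f_\mu$ is persistent: if $q:=x_{N+1}\ne p$ is its first point off $p$ (so $f_\mu(q)=p$), then for large $k$ the point $x_{N+1+k}$ lies in the small neighborhood $U$ of $p$ used to define $W^u_{f_\mu}(p)$, and $f_\mu^{\,k}$ is a local diffeomorphism at $x_{N+1+k}$ since the orbit meets no point of $S_{f_\mu}$; hence $q=f_\mu^{\,k}(x_{N+1+k})$ lies in the \emph{interior} of $W^u_{f_\mu}(p)$, and since this, $q\notin S_{f_\mu}$ and $\{x_n\}\cap S_{f_\mu}=\emptyset$ are all open conditions, $f_\mu$ has a regular homoclinic orbit for all nearby maps. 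Thus $A=\{\mu\in[0,1]:f_\mu\text{ has a regular homoclinic orbit associated to }p\}$ is open, so $\mu_0=\sup A\notin A$: $f_{\mu_0}$ has no regular homoclinic orbit, and it remains only to produce some homoclinic orbit of $f_{\mu_0}$ associated to $p$ (which is then automatically critical), i.e. to find a preimage of $p$ other than $p$ inside $W^u_{f_{\mu_0}}(p)$.

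Suppose, for contradiction, $f_{\mu_0}$ has no homoclinic orbit associated to $p$. Take $\mu_n\nearrow\mu_0$ with $\mu_n\in A$, and let $q_n\ne p$ be the first off-$p$ point of a regular homoclinic orbit of $f_{\mu_n}$; by the above $q_n$ lies in the interior of $W^u_{f_{\mu_n}}(p)$, and after passing to a subsequence we may assume $q_n<p$ for all $n$. Because $|f_{\mu_0}'(p)|>1$ and $f_{\mu_n}\to f_{\mu_0}$ in $C^1$, the $f_{\mu_n}$ are locally injective near $p$ for $n$ large, so $q_n$ stays bounded away from $p$. Write $\alpha(\mu)\le\beta(\mu)$ for the endpoints of $\clos(W^u_{f_\mu}(p))$ in $\overline\R$; continuity of $W$ at $\mu_0$ gives $\alpha(\mu_n)\to\alpha(\mu_0)$, $\beta(\mu_n)\to\beta(\mu_0)$, and $q_n\in(\alpha(\mu_n),p)$, so along a further subsequence $q_n\to q\in[\alpha(\mu_0),p)$. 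If $q$ is finite and $q\in W^u_{f_{\mu_0}}(p)$ — in particular whenever $\alpha(\mu_0)<q$, since then $q\in(\alpha(\mu_0),\beta(\mu_0))\subseteq W^u_{f_{\mu_0}}(p)$ — then $f_{\mu_0}(q)=\lim f_{\mu_n}(q_n)=p$ with $q\ne p$, and prepending a preorbit of $q$ converging to $p$ gives a homoclinic orbit of $f_{\mu_0}$, contradiction. Hence $q=\alpha(\mu_0)$, and if $\alpha(\mu_0)$ is finite then $\alpha(\mu_0)\notin W^u_{f_{\mu_0}}(p)$ and $f_{\mu_0}(\alpha(\mu_0))=p$.

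Assume next that $\alpha(\mu_0)$ and $\beta(\mu_0)$ are both finite, so $\clos(W^u_{f_{\mu_0}}(p))=[\alpha(\mu_0),\beta(\mu_0)]$ is compact. Since $W^u_{f_{\mu_0}}(p)$ is forward invariant with $f_{\mu_0}(W^u_{f_{\mu_0}}(p))=W^u_{f_{\mu_0}}(p)$, the map $f_{\mu_0}$ carries $[\alpha(\mu_0),\beta(\mu_0)]$ onto itself, so it attains the value $\alpha(\mu_0)$ at some $z_0$ there. An interior $z_0$ would give $\alpha(\mu_0)=f_{\mu_0}(z_0)\in W^u_{f_{\mu_0}}(p)$, false; $z_0=\alpha(\mu_0)$ would make $\alpha(\mu_0)$ a fixed point, contradicting $f_{\mu_0}(\alpha(\mu_0))=p\ne\alpha(\mu_0)$; hence $z_0=\beta(\mu_0)$ and $f_{\mu_0}(\beta(\mu_0))=\alpha(\mu_0)$. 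The same dichotomy for the value $\beta(\mu_0)$ — not attained at $\beta(\mu_0)$ (there $f_{\mu_0}$ equals $\alpha(\mu_0)$) nor at $\alpha(\mu_0)$ (there it equals $p<\beta(\mu_0)$) — forces it to be attained at an interior point, so $\beta(\mu_0)\in W^u_{f_{\mu_0}}(p)$; then $\beta(\mu_0)$ carries a preorbit converging to $p$, and $\cdots\to\beta(\mu_0)\to\alpha(\mu_0)\to p\to p\to\cdots$ is a homoclinic orbit of $f_{\mu_0}$ associated to $p$, the contradiction sought.

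There remains the case in which $\clos(W^u_{f_{\mu_0}}(p))$ is unbounded — $\alpha(\mu_0)=-\infty$ (so the $q_n$ escape to $-\infty$), or $\alpha(\mu_0)$ finite with $\beta(\mu_0)=+\infty$ — and this is, I expect, the real obstacle: compactness of $\clos(W^u_{f_{\mu_0}}(p))$ and boundedness of the $q_n$ are both lost, and the cores $q_n$ or the auxiliary critical points $c_n\in(q_n,p)$ (which exist by Rolle, with $f_{\mu_n}'(c_n)=0$) may run off to infinity. One would like to rerun the forward-invariance argument in a limiting form: $C^1$-continuity of the family yields $f_{\mu_0}(q_n)\to p$ and, along a convergent subsequence of the $c_n$, a critical point $c=\lim c_n$ of $f_{\mu_0}$ with $f_{\mu_0}(c)=\alpha(\mu_0)$ whose forward orbit $c\to\alpha(\mu_0)\to p$ produces a critical homoclinic orbit as soon as $c\in W^u_{f_{\mu_0}}(p)$; the delicate point is to rule out the remaining possibility — a genuine escape of the relevant orbits and critical points to $\pm\infty$ — which should contradict $\clos(W^u_{f_{\mu_n}}(p))\to\clos(W^u_{f_{\mu_0}}(p))$ in $\overline\R^2$, i.e. the continuity of $W$ at $\mu_0$.
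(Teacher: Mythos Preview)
The proof is incomplete: you handle the bounded case but, as you acknowledge, leave the unbounded case open, and your approach of tracking the limit of the $q_n$ genuinely breaks down there --- if $\alpha(\mu_0)=-\infty$ nothing prevents $q_n\to-\infty$, and there is no compactness left to rescue the argument. The Rolle-type idea you sketch at the end (finding critical points $c_n$ between $q_n$ and $p$) suffers from the same defect: the $c_n$ may escape as well.

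The paper avoids tracking $q_n$ altogether. It first proves a structural lemma (Lemma~\ref{nopreimagenborde}): if $f$ has no homoclinic orbit associated to $p$ then $p$ has no preimage on $\partial W^u_f(p)$. This is a short case analysis on the shape of the interval $W^u_f(p)$: in the bounded case the endpoints are periodic of period at most $2$; when $W^u_f(p)=(r,+\infty)$ one first rules out $f'(p)<0$ (else $f([r,p])$ would have to equal the non-compact set $[p,+\infty)$) and then surjectivity of $f\colon[r,p]\to[r,p]$ forces $f(r)=r$. Once $\clos(W^u_f(p))\cap f^{-1}(p)=\{p\}$ is known, the paper separates $\clos(W^u_f(p))$ and $f^{-1}(p)\setminus\{p\}$ by disjoint open sets and uses the continuity of $W$ at $\mu_0$ to conclude that every nearby map also has no homoclinic orbit (Lemma~\ref{estnooh}), contradicting the definition of $\mu_0$. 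Your bounded-case computation is in effect a roundabout proof of the bounded case of Lemma~\ref{nopreimagenborde} (you derive $f_{\mu_0}(\alpha)=p$ from the limit and then show this forces a homoclinic through $\beta$, whereas the paper shows directly that $\alpha$ is periodic); and in the half-unbounded case $\alpha(\mu_0)$ finite, $\beta(\mu_0)=+\infty$, the paper's observation $f_{\mu_0}(\alpha(\mu_0))=\alpha(\mu_0)$ would instantly contradict your $f_{\mu_0}(\alpha(\mu_0))=p$. The missing ingredient in your argument is precisely this lemma in the unbounded cases together with the separation step, which replaces your attempted limit.
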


Unfortunately, this simple and concrete result is not true in higher dimensions. An example is shown in dimension two.


{\bf Acknowledgement}. The author thanks \'Alvaro Rovella for having proposed the problems considered in the present article and for many enlightening conversations.
This work is related with a graduate monograph under his guidance.
\end{section}

\begin{section}{Proof of Theorem \ref{class}}
\label{capohr}

The first part of Theorem \ref{class} follows by the following:

\begin{prop}\label{lemma8} 
If $K$ is an infinite uncontable expanding set then there exists a homoclinic orbit associated to a periodic repellor. 
\end{prop}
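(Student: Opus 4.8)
The plan is to reduce, by the structural theory of expanding sets, to a single transitive piece carrying dense periodic points, and then to manufacture a homoclinic orbit there by a shadowing argument; uncountability is used only to exclude the \emph{gradient-like} case in which every orbit is pre-periodic. First I would record the standard consequences of the hypotheses. We may assume $K$ is compact and $f$-invariant. Covering $K$ by finitely many charts, condition (1) provides a local expansion estimate $d(f(x),f(y))\ge\mu\,d(x,y)$ with $\mu>1$ for nearby $x,y\in K$; from this one gets that $f|_K$ is positively expansive with a constant $\delta>0$, that $f|_K$ has the shadowing property and is open, and that distinct preimages in $K$ of a common point are $\epsilon_0$-separated for some fixed $\epsilon_0>0$. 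The separation property makes each fibre $f^{-n}(z)\cap K$ finite, and an accumulation argument together with expansivity shows $\mathrm{Fix}(f^n|_K)$ is finite for every $n$; hence $\mathrm{Per}(f|_K)$ is countable, and therefore so is the set $P=\bigcup_{\gamma}\bigcup_{n\ge0}f^{-n}(\gamma)\cap K$ of points that some iterate of $f$ carries onto a periodic orbit $\gamma\subset K$.

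Next I would invoke the spectral decomposition for expanding sets: $\Omega(f|_K)$ is a finite disjoint union of compact invariant sets $\Lambda_1,\dots,\Lambda_s$, each itself an expanding set on which $f$ is transitive and has dense periodic points. I claim some $\Lambda_i$ is infinite. Otherwise $\Omega(f|_K)$ is finite; then for each $x\in K$ the set $\omega_f(x)$ is a nonempty compact invariant chain-transitive subset of $\Omega(f|_K)$, hence a single periodic orbit $\gamma$, so $f^n(x)$ stays within $\delta$ of $\gamma$ for all large $n$, and positive expansivity forces $f^N(x)\in\gamma$ for some $N$; thus $K=P$ is countable, contrary to hypothesis. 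Fix an infinite piece $\Lambda=\Lambda_i$; being infinite, transitive and with dense periodic points, it has no isolated points.

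Now I would work inside $\Lambda$ and fix a periodic orbit $\gamma\subset\Lambda$ of period $k$, which is a repellor. The contracting inverse branches of $f^k$ along $\gamma$ preserve $\Lambda$, so $W^u(\gamma)\cap\Lambda$ contains a relative neighbourhood of $\gamma$ in $\Lambda$ and, being the union of the forward $f$-images of that neighbourhood, is relatively open in $\Lambda$; since $\Lambda$ is perfect and $\gamma$ finite, $(W^u(\gamma)\cap\Lambda)\setminus\gamma$ is a nonempty relatively open subset of $\Lambda$. I would then prove that $\bigcup_{m\ge0}f^{-m}(\gamma)\cap\Lambda$ is dense in $\Lambda$: given a nonempty relatively open $W\subset\Lambda$, transitivity yields $x_0\in W$ and $j\ge0$ with $f^j(x_0)$ arbitrarily close to some $p_0\in\gamma$; concatenating $x_0,f(x_0),\dots,f^j(x_0)$ with the periodic orbit through $p_0$ gives an arbitrarily accurate pseudo-orbit, which by shadowing is traced by a true orbit through a point $\tilde x$ close to $x_0$ (hence $\tilde x\in W$) whose forward orbit stays within $\delta$ of $\gamma$; positive expansivity then forces $f^j(\tilde x)\in\gamma$, so $\tilde x\in W\cap f^{-j}(\gamma)$. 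Intersecting this dense set with $(W^u(\gamma)\cap\Lambda)\setminus\gamma$ produces $q\in W^u(\gamma)$ with $q\notin\gamma$ and $f^m(q)\in\gamma$ for some $m\ge1$; concatenating a preorbit of $q$ converging to $\gamma$ with the finite piece $q,f(q),\dots,f^m(q)\in\gamma$ and then with the periodic orbit of $f^m(q)$ yields a bi-infinite orbit homoclinic to the repellor $\gamma$, proving the proposition.

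The step I expect to be the main obstacle is the density of the preorbit of $\gamma$ in $\Lambda$, and inside it the passage from ``an orbit comes arbitrarily close to $\gamma$'' to ``an orbit actually meets $\gamma$'': this is precisely where shadowing and positive expansivity must be combined, and it is the same mechanism underlying the second, quantitative part of Theorem \ref{class}. Secondary points that need care are the relative openness of $W^u(\gamma)\cap\Lambda$ together with perfectness of $\Lambda$, and the verification that the reduction to a transitive piece genuinely uses the uncountability of $K$.
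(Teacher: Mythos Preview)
Your argument is correct, but it follows a different route from the paper's. The paper does not invoke a spectral decomposition or shadowing at all. Instead it cites a structure result (from \cite{C}) giving $K=\bigcup_{n\ge0}f^{-n}(\clos(\per(f)))$, which together with uncountability forces $\per(f|_K)$ to be infinite; then, using the uniform contracting inverse branches on balls of a fixed radius $r$, it shows that any two periodic points $p,q\in K$ with $\dist(p,q)<r$ satisfy $W^u(p)=W^u(q)$. Compactness produces such a pair, and after passing to a power so that $p,q$ are fixed, one splices a preorbit of $q$ converging to $p$ with a preorbit of $p$ converging to $q$: a short open-map argument yields a point $x\in W^u(p)$ with $f^{n_0+n_1}(x)=p$, hence a homoclinic orbit. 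Your approach instead reduces to an infinite transitive basic piece $\Lambda$ (uncountability enters to exclude a finite nonwandering set), and then combines shadowing with positive expansivity to get density of $\bigcup_{m\ge0}f^{-m}(\gamma)$ in $\Lambda$, which you intersect with the relatively open set $W^u(\gamma)\cap\Lambda$. This is heavier machinery but more systematic: in effect you are already proving the mechanism behind Proposition~\ref{lemma7} (the transitive case of Theorem~\ref{class}) on the way, whereas the paper separates the two and handles Proposition~\ref{lemma8} by the bare-hands ``two nearby periodic points share an unstable set'' trick. The paper's route is shorter and needs only the inverse-branch estimates from \cite{C}; yours is more conceptual and makes the link to the homoclinic-class statement explicit, at the cost of importing the spectral decomposition and shadowing for expanding sets.
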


\begin{proof} 
By Theorem II.10 of \cite{C} 
we have that $K=\cup_{n\geq 0}f^{-n}(\clos(\per(f)))$. 
Therefore there is an infinite number of periodic points because $K$ is uncountable.
First we will show that there exist periodic points $p,q$ 
such that $\W^u(p)=\W^u(q)$. 
As proved in \cite{C} there are constants $r>0$, $\mu\in (0,1)$ and $c>0$ such that
\begin{enumerate} 
\item if $x\neq y$ and $f(x)=f(y)$ then $\dist(x,y)>c$ and
\item for every $x\in K$ and $a\in f^{-1}(x)$ there  
exists $\varphi\colon B_r(x)\to K$ such that $\varphi (x)=a$, $f\circ \varphi(y)=y$ 
for every $y\in B_r(x)$ and $\dist (\varphi (z),\varphi(w))\leq \mu\dist (z,w)$, 
for every $z, w\in B_r(x)$.
\end{enumerate}
This implies that for every $p\in K$, $B_r(p)\subset \W^u(p)$. 
Thus if $0<\dist(p,q)< r$ and $p,q\in\per(f)$ we 
have that $q\in\interior(\W^u(p))$. 
\renewcommand{\W}{W^u}
Take $U_q$ a neighborhood of $q$ such that $U_q\subset \W(p)$ and 
$\W(q)=\cup_{n\geq 0}f^n(U_q)$. 
Thus $\W(q)\subset \W(p)$ and analogously we obtain $\W(p)\subset \W(q)$. 
We conclude that $\dist(p,q)<r$ and $p,q\in \per(f)$ implies $\W(q)= \W(p)$. 
Now since $\card(\per)=\infty$ and $K$ is a compact set we have that 
there is an infinite number of pairs of periodic points $(p,q)$ 
whose unstable sets coincide.

\par Taking a power of $f$ we can suppose that there are 
two fixed points $p,q$ such 
that $\W(q)= \W(p)$. 
Let $\{p_n\}_{n\geq0}$ and $\{q_n\}_{n\geq0}$ be such that 
$p_0=p$, $q_0=q$, $p_n\to q$, $q_n\to p$, $f(p_{n+1})=p_n$ and $f(q_{n+1})=q_n$. 
Choose $n_0\geq 0$ such that $q_{n_0}\in B_r(p)$ and an 
open set $U_0$ such that $q_{n_0}\in U_o\subset B_r(p)$. 
Then $f^{n_0}(U_0)$ is a neighborhood of $q$ since $f$ is an open map. 
Let $n_1\geq 0$ be such that $p_{n_1}\in f^{n_0}(U_0)$ and $f^{n_1}(p_{n_1})=p$. 
Take $x\in U_0$ such that $f^{n_0}(x)=p_{n_1}$. 
Since $x\in U_0$ we have that $x\in \W(p)$ and $f^{n_0+n_1}(x)=f^{n_1}(p_{n_1})=p$. And then  $x$ has a homoclinic orbit associated to the periodic repellor $p$. 
\end{proof}

In order to show the second part of Theorem \ref{class} assume that
$K$ is an expanding transitive set.
Observe that if $K$ is finite then it must be a periodic orbit, because transitivity, and the result is trivial. 
We will suppose that $K$ is not a finite set.

\begin{obs}\label{lemma6} 
If $K$ is an expanding transitive set for $f$ then $K$ has no isolated points. 
\end{obs}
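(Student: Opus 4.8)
The plan is to argue by contradiction. Suppose $x\in K$ is isolated, so that $\{x\}$ is open in $K$. Since $K$ is transitive, choose $z\in K$ with dense forward orbit; this orbit meets the nonempty open set $\{x\}$, so $f^{n_0}(z)=x$ for some $n_0\ge 0$, and hence the forward orbit of $x$ coincides with the tail $\{f^n(z):n\ge n_0\}$. Because $K$ is a finite union of this tail with the points $f^n(z)$, $n<n_0$, the closure of the tail contains $K$ minus at most $n_0$ points, hence is infinite; so the forward orbit of $x$ is infinite and $x$ is not periodic. The whole point of the proof is then to contradict this last fact, and this is where expansion must be used.

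First I would check that isolatedness propagates forward along the orbit: if $y\in K$ is isolated, so is $f(y)$. Apply the $\mu$-contracting inverse branches of \cite{C} (property (2) in the proof of Proposition \ref{lemma8}) at the point $f(y)$, with preimage $y$: one gets $\varphi$ with $\varphi(f(y))=y$, $f\circ\varphi=\mathrm{id}$, and $\varphi$ carrying points of $K$ near $f(y)$ to points of $K$ near $y$. If $f(y)$ were not isolated there would be $w_k\in K$ with $w_k\ne f(y)$ and $w_k\to f(y)$; then $\varphi(w_k)\in K$, $\varphi(w_k)\to y$, and $\varphi(w_k)\ne y$ (otherwise $w_k=f(\varphi(w_k))=f(y)$), contradicting that $y$ is isolated. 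By induction every point of the forward orbit of $x$ is an isolated point of $K$. To finish, invoke the decomposition $K=\bigcup_{n\ge 0}f^{-n}(\clos(\per(f)))$ from \cite{C}, already used in Proposition \ref{lemma8}: there is $m\ge 0$ with $f^m(x)\in\clos(\per(f))$. Since $f^m(x)$ is an isolated point of $K$, the open set $\{f^m(x)\}$ meets $\clos(\per(f))$, hence meets $\per(f)$ itself, so $f^m(x)$ is periodic. Then the forward orbit of $x$ is eventually the finite orbit of $f^m(x)$ and is therefore finite, contradicting the first paragraph and proving the remark. (Alternatively, one may skip the propagation step by quoting that periodic points are dense in a transitive expanding set, which is its own nonwandering set; then $x\in\clos(\per(f))=K$ forces $x$ itself periodic, and one concludes as before.)

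The step I expect to require care is the propagation of isolatedness: $f|_K$ is in general not invertible, so one must verify that the contracting branch $\varphi$ of \cite{C} actually keeps the orbit inside $K$, i.e.\ that $\varphi(w_k)\in K$. This is where the isolation of $K$ as an invariant set is used — its points are precisely those whose whole forward orbit stays in a fixed isolating neighbourhood — together with the uniform contraction, which pushes $\varphi(w_k)$ close enough to $y\in K$ while, from the first iterate on, the orbit of $\varphi(w_k)$ agrees with that of $w_k\in K$. The rest is routine: a dense orbit of an infinite space is infinite, and an open set meeting $\clos(\per(f))$ meets $\per(f)$.
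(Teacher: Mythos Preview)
Your argument is correct (modulo the standing assumption, stated just before the remark, that $K$ is infinite --- you use this implicitly when you say the tail is infinite), but the route is genuinely different from the paper's. The paper propagates isolatedness \emph{backwards} rather than forwards: since $f$ is locally injective on $K$, every $f$-preimage of an isolated point is isolated. Thus if any isolated point exists, the transitive point $x$ itself is isolated; then, using only that $f\colon K\to K$ is onto (an immediate consequence of transitivity), one picks $y\in K$ with $f(y)=x$, observes $y$ is isolated too, so the dense orbit of $x$ must visit $\{y\}$, giving $f^{n+1}(x)=x$ and the contradiction. Your version instead pushes isolatedness \emph{forward} along the orbit via the contracting inverse branches, and then brings in the decomposition $K=\bigcup_{n\ge0}f^{-n}(\clos(\per f))$ from \cite{C} to land on a periodic point. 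Both work, but the paper's argument is more self-contained: it needs neither the inverse-branch machinery nor the Craizer decomposition, only local injectivity and surjectivity of $f|_K$. Your alternative ending (periodic points are dense in a transitive expanding set, so an isolated point is automatically periodic) is closer in spirit to the paper's economy and would let you drop the forward-propagation step entirely.
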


\begin{proof}	
Let $x\in K$ whose future orbit is dense in $K$. 
Since $K$ is infinite we have that $x$ is not periodic. 
Notice that every preimage of an isolated point is isolated too, 
this is because $f$ is locally injective. 
If by contradiction we assume that there are isolated points, 
then $x$ has to be itself an isolated point. 
Since $f\colon K\to K$ is onto there is $y\in f^{-1}(x)$ and $y$
has to be an isolated point. Then there exists $n\geq 0$ such that $f^n(x)=y$ and then $x$ is periodic. 
This contradiction proves that there are no isolated points. 
\end{proof}

This remark implies that if $K$ is not a periodic orbit, 
there are homoclinic orbits because if $K$ is an infinite set and has 
no isolated points it is uncountable. 
The following result implies the second part of Theorem \ref{class} taking $p$ as a periodic point.

\begin{prop}
\label{lemma7} 
If $K$ is an expanding transitive set for $f$ then for every 
$p\in K$, $\W^u(p)=K$ and $\cup_{n\geq 0} f^{-n}(p)$ is dense in $K$. 
\end{prop}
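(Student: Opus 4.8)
The plan is to establish both assertions by exploiting the two structural facts provided by \cite{C} and recalled in the proof of Proposition \ref{lemma8}: the uniform local product/backward-contraction property (every $B_r(x)\subset W^u(x)$, with inverse branches $\varphi$ contracting by $\mu<1$) and the fact that $K=\cup_{n\geq 0}f^{-n}(\clos(\per f))$. By Remark \ref{lemma6}, $K$ has no isolated points, so $K$ is a perfect compact set; together with transitivity this is the setting in which I will argue. Fix $p\in K$ and a point $z\in K$ whose forward orbit $\{f^n(z):n\geq 0\}$ is dense in $K$.

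First I would prove $\cup_{n\geq 0}f^{-n}(p)$ is dense in $K$. Let $y\in K$ and $\epsilon>0$; I must find a preimage of $p$ within $\epsilon$ of $y$. Using density of the forward orbit of $z$, pick $n$ with $f^n(z)$ very close to $y$; more usefully, I want a point whose forward orbit \emph{passes through} $p$. The idea: since $B_r(q)\subset W^u(q)$ for every $q\in K$, in particular $p$ lies in the interior (relative to $K$) of $W^u(q)$ whenever $q$ is within $r$ of $p$; but more directly, by density of $\{f^n(z)\}$ there is $m$ with $\dist(f^m(z),p)<r$, and then by property (2) there is a contracting inverse branch $\varphi:B_r(f^m(z))\cap K\to K$ with $\varphi(f^m(z))$ an arbitrary preassigned preimage — in particular I can pull $p$ back along this branch. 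Iterating: given $y$, choose by density an index $k$ with $f^k(z)$ within $\epsilon$ of $y$; I then want to modify the orbit segment from $z$ to $f^k(z)$ so that it later hits $p$. Concretely, choose $\ell>k$ with $f^\ell(z)$ within $r$ of $p$ (density again), take the contracting inverse branch of $f^{\ell-k}$ defined near $f^k(z)$ sending $f^\ell(z)\mapsto$ stuff, and compose with a branch that sends a point near $p$ to $p$ exactly — this requires $p$ to be in the image of the relevant branch, which holds because inverse branches of points of $K$ land in $K$ and hit every preimage. The resulting point $w\in K$ satisfies $f^j(w)=p$ for some $j$ and $\dist(w,y)\le \dist(w,f^k(z))+\epsilon$; the first term is controlled because the inverse branches contract by powers of $\mu$. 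Hence $w\in\cup_n f^{-n}(p)$ lies near $y$, proving density.

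For $W^u(p)=K$: the inclusion $W^u(p)\subset K$ is immediate once one notes preorbits converging to $p\in K$ stay in $K$ (by isolation of $K$, or because inverse branches preserve $K$). For $K\subset W^u(p)$, take $y\in K$; I must produce a preorbit of $y$ converging to $p$. Since $B_r(p)\cap K\subset W^u(p)$, it suffices to get a preorbit of $y$ that eventually enters $B_r(p)$ and then follow the contracting inverse branch at $p$ into $p$. By the density statement just proved, there is $w$ near $y$ with $f^j(w)=p$; running backward from $p$ along the contracting branch $\varphi$ fixing $p$ gives points $\varphi^i(p)\to p$, and prepending the finite segment $w=x_j, f(w)=x_{j-1},\dots, f^j(w)=x_0=p$ reversed, then continuing with $\varphi^i(p)$, yields a preorbit of $w$ converging to $p$. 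To get an actual preorbit of $y$ (not merely of a nearby $w$) converging to $p$, I instead argue that $W^u(p)$ is closed under the operation "nearby points of $K$": since $y$ can be approximated arbitrarily well by such $w\in W^u(p)$ and $W^u(p)$ is forward-invariant and its closure in $K$ is forward-invariant, transitivity forces $\clos(W^u(p)\cap K)=K$; then a direct argument (pull $y$ itself back into $B_r(p)$ using a forward iterate of $z$ hitting near $y$ and near $p$, exactly as in the density argument) shows $y\in W^u(p)$.

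The main obstacle I expect is the bookkeeping in the backward argument: ensuring that when I pull a point back along a long composition of inverse branches I can \emph{steer} the preorbit to land exactly on the prescribed point $p$ (respectively into $B_r(p)$) while keeping the other end $\epsilon$-close to the target $y$. The contraction estimate $\dist(\varphi(z),\varphi(w))\le\mu\dist(z,w)$ is what makes this work — it guarantees the "error" from choosing a slightly wrong starting point decays geometrically — but combining this with transitivity (which only gives \emph{forward} density of one orbit) to realize prescribed \emph{backward} behaviour requires carefully alternating "use density to get close" and "use the inverse branch to correct exactly," and checking that the branches involved are all defined on balls of the uniform radius $r$. Once that combinatorial/estimative core is in place, both conclusions of the proposition follow, and in particular $\clos(\cup_{n\geq0}f^{-n}(p)\cap W^u(p)\cap K)=K$, giving the second part of Theorem \ref{class}.
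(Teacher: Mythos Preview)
Your density-of-preimages argument is essentially the paper's: pick a point with dense forward orbit, wait until it is $\delta/2$-close to the target $y$, wait further until it is close to $p$, and use a contracting inverse branch to pull $p$ back to a point $\delta$-close to $y$. The paper does exactly this in two lines, so that part is fine, only overdecorated.

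The $W^u(p)=K$ part, however, has a genuine confusion. You take $w$ near $y$ with $f^j(w)=p$ and then try to build a preorbit of $w$ converging to $p$ from the \emph{forward} segment $w,f(w),\dots,f^j(w)=p$ together with the iterates $\varphi^i(p)$ of an inverse branch ``fixing $p$''. This is backward: the relation $f^j(w)=p$ exhibits $w$ as a point of $\cup_{n\ge 0}f^{-n}(p)$, not of $W^u(p)$; it gives a preorbit of $p$ passing through $w$, not a preorbit of $w$ landing on $p$. (Also, for general $p\in K$ there is no inverse branch fixing $p$.) So this detour proves nothing toward $y\in W^u(p)$, and the subsequent closure argument---``transitivity forces $\clos(W^u(p))=K$''---still leaves you short of the actual (non-closed) equality.

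The paper avoids all of this by arguing \emph{forward} rather than backward. Pick the dense-orbit point $x$ and assume (after replacing it by an iterate) that $x\in B_r(p)\subset W^u(p)$; take $\epsilon$ with $B_\epsilon(x)\subset B_r(p)$. Since $W^u(p)$ is forward-invariant, $f^n(B_\epsilon(x))\subset W^u(p)$ for every $n$. Expansion gives $n_0$ such that $B_r(f^n(x))\subset f^n(B_\epsilon(x))$ for all $n\ge n_0$, and density of $\{f^n(x)\}$ then yields $K=\bigcup_{n\ge 0}B_r(f^n(x))\subset W^u(p)$. This is exactly the ``direct argument'' you gesture at in your last sentence (pulling $y$ back into $B_r(p)$ along the inverse branch that shadows a long forward segment of the dense orbit from near $p$ to near $y$), but viewed from the forward side it is a one-line proof and should replace the preceding paragraph entirely. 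With that fix, establishing $W^u(p)=K$ first also makes the density argument slightly shorter, which is the order the paper uses.
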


\begin{proof} Let $x\in K$ whose future orbit is dense in $K$. 
From the definition of expanding set, it is easy to see that $f\colon K\to K$ is an open map.
Let $r>0$ be such that $B_r(p)\subset \W^u(p)$ and 
suppose that $x\in B_r(p)$. 
Let $\epsilon >0$ be such that $B_\epsilon (x)\subset B_r(p)$. 
Since $K$ is an expanding set there exists $n_0>0$ 
such that for every $n\geq n_0$, 
$B_r(f^n(x))\subset f^n(B_\epsilon (x))$. 
Then $\cup _{n\geq 0}B_r(f^{n}(x))=K$ and then $\W^u(p)=K$. 

On the other hand, 
for every $y\in K$ and $\delta\in (0,r)$ we can 
suppose that $x\in B_{\delta/2}(y)$ and  
$f^{n_1}(x)\in B_{\delta/2}(p)$ for some $n_1\geq 0$. Then there is $z\in B_{\delta/2}(x)$ such that $f^{n_1}(z)=p$ and then $z\in B_\delta(y)$. 
\end{proof}


\end{section}

\begin{section}{Critical Homoclinic Orbit}\label{capohc} In this section we analyze some bifurcations that occur near a critical homoclinic orbit. As was previously explained, generically in $C^2(M)$ the set of fold type point constitute an open and dense subset of the set of critical points. To see what happens when the map $f$ is perturbed, we state without proof the following classical result, whose proof can be found for example in \cite{GG}. 

\begin{prop}\label{truchazo} For every $f\in C^2(M)$ and $x\in M$ a fold type point of $f$, there exists two neighborhoods $U$ of $x$ and $V$ of $p=f(x)$, a local coordinate $\phi\colon V\to \R^d$ and a neighborhood ${\cal U}$ of $f$ in $C^2(M)$ such that the following conditions hold for every $g\in{\cal U}$:
	\begin{enumerate}
    \item Every point in $S_g\cap U$ is a fold type point, where $S_g$ is the set of critical points of $g$.
    \item $S_g\cap U$ and $g(S_g\cap U)$ are codimension one submanifold of $M$.
    \item $\phi(g(S_g\cap U)\cap V)$ is the graph of a function $R_g\in C^1(\R^{d-1},\R)$ such that $g\mapsto R_g$ is differentiable.
    \end{enumerate}
\end{prop}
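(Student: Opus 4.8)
The plan is to reduce to the Whitney normal form and then apply the implicit function theorem treating $g$ as a parameter. Since $x$ is of fold type, by definition there are diffeomorphisms $\varphi$ and $\psi$ conjugating $f$ near $x$ to $\qu$ near $\qu(0)=0$; performing these changes of coordinates we may assume $M=\R^d$, $x=0=p$, that $\phi=\psi$ is the coordinate in the statement, and that $f=\qu$, i.e. $f(a_1,\dots,a_d)=(a_1,\dots,a_{d-1},a_d^{\,2})$. For this $f$ one computes $\det Df_a=2a_d$, so $S_f=\{a_d=0\}$ is a hyperplane, $\ker Df_0=\langle\partial/\partial a_d\rangle$, and the differential of $a\mapsto\det Df_a$ does not vanish on $\ker Df_0$. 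The starting point is the classical fact of singularity theory (see \cite{GG}) that $c$ is a fold point of a $C^2$ map $h$ if and only if $\operatorname{rank}Dh_c\geq d-1$ and the differential of $a\mapsto\det Dh_a$ is non-zero on $\ker Dh_c$; these are conditions on the $2$-jet of $h$ at $c$ that are open in $c$ and stable under $C^2$-small perturbations of $h$.

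First I would locate the critical set of a perturbation. Given $g$ in a small $C^2$ neighborhood ${\cal U}$ of $f$, the function $F(g,s,t)=\det Dg_{(s,t)}$ is $C^1$ in $(s,t)$, smooth in $g$, and has $\partial F/\partial t\neq 0$ at $(f,0,0)$; so after shrinking $U$ the implicit function theorem (with the pair $(g,s)$ as parameter) yields a $C^1$ function $h_g\colon B\subset\R^{d-1}\to\R$, depending differentiably on $g$, with $S_g\cap U=\{(s,h_g(s)):s\in B\}$. This exhibits $S_g\cap U$ as a codimension one $C^1$ submanifold. At each of its points $Dg$ has rank at least $d-1$ --- an open condition holding for $g$ near $f$ --- hence rank exactly $d-1$ since $\det Dg$ vanishes there, and the kernel line varies continuously and stays close to $\langle\partial/\partial a_d\rangle$; by the characterization just quoted every point of $S_g\cap U$ is a fold point, which is item (1). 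Being a fold point, $\ker Dg$ is transverse to $S_g$, so $Dg$ restricted to $TS_g$ is injective and $g|_{S_g\cap U}$ is a $C^1$ immersion, injective after one more shrinking of $U$; therefore $g(S_g\cap U)$ is a codimension one $C^1$ submanifold, giving item (2).

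For item (3) I would push the parametrization of $S_g\cap U$ forward by $g$ and read it in the coordinate $\phi$: consider $s\mapsto\phi\bigl(g(s,h_g(s))\bigr)=(\alpha_g(s),\beta_g(s))\in\R^{d-1}\times\R$. When $g=f$ this equals $s\mapsto(s,0)$, so for $g$ near $f$ the map $\alpha_g$ is close to the identity and $\beta_g$ is small, both in the $C^1$ sense; after shrinking $B$ and $V$, $\alpha_g$ is a $C^1$ diffeomorphism onto its image and $R_g:=\beta_g\circ\alpha_g^{-1}$ is the desired function, with $\phi(g(S_g\cap U)\cap V)$ equal to its graph. That $g\mapsto R_g$ is differentiable then follows by combining the differentiable dependence of $h_g$ on $g$ with the differentiability of composition and inversion of $C^1$ diffeomorphisms.

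The main obstacle is not any single estimate but making the hypothesis ``fold type point'' operational: one must invoke from \cite{GG} the description of folds by open, perturbation-stable conditions on the $2$-jet, and one must fix and then verify the exact meaning of ``$g\mapsto R_g$ is differentiable'', since composition of maps is not smooth in the naive $C^k$ topologies and the loss of one derivative has to be accounted for. Both ingredients are entirely standard, which is why the statement is quoted rather than proved in the paper.
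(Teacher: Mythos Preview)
The paper does not prove this proposition: it is explicitly ``state[d] without proof'' with a reference to \cite{GG}, as you yourself observe in your final paragraph. There is therefore no argument in the paper to compare your sketch against.

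On its own merits your outline is correct and is the standard route: pass to the Whitney normal form $\qu$, apply the implicit function theorem to $(g,s,t)\mapsto\det Dg_{(s,t)}$ to write $S_g\cap U$ as a graph $t=h_g(s)$, invoke the open $2$-jet characterization of folds (corank one together with $d(\det Dg)\neq 0$ on $\ker Dg$) to obtain item~(1), use transversality of $\ker Dg$ to $S_g$ to see that $g|_{S_g\cap U}$ is an immersion and hence get item~(2), and finally read off $R_g=\beta_g\circ\alpha_g^{-1}$ from the pushed-forward parametrization. You are also right to flag the one genuine subtlety, namely the precise meaning of ``$g\mapsto R_g$ differentiable'' in view of the derivative loss in $C^k$ topologies; the paper never clarifies this, and in fact all it uses downstream (the submersion computation in Lemma~\ref{l1graf} and equation~(\ref{eqderivada})) is that the scalar functional $g\mapsto R_g(0)$ is $C^1$ on $C^2(M)$, which follows from your construction since $h_g$, $\alpha_g$, $\beta_g$ and the inversion $\alpha_g\mapsto\alpha_g^{-1}$ all depend smoothly on $g$ once the target is taken in a space one derivative lower.
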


Let $\phi$, ${\cal U}$ and $R_g$ be as in the previous proposition. 
We can suppose $U$ such that there exist $\psi\colon U\to \R^d$ a local coordinate map and define, for every $g\in {\cal U}$, $\tilde g=\phi\circ g\circ \psi^{-1}$. 
Since $x$ is a fold type point for $f$ we can suppose that $\tilde f(a_1,\dots,a_d)=(a_1,\dots,a_{d-1},a_d^2)$. 
Let ${\cal F}\colon{\cal U}\to\R$ be defined by ${\cal F} (g)= R_g (0)$. 


\begin{lema}\label{l1graf} In the previous notation, the set \[{\cal C}=\{ g\in{\cal U}: p\hbox{ is a critical value of } g|_U\} \] is a codimension one submanifold of ${\cal U}$. \end{lema}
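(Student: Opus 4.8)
The plan is to identify $\mathcal C$ with a regular level set of the $C^1$ map $\mathcal F$ and then apply the implicit function theorem in Banach spaces. In the coordinates fixed above we have $\psi(x)=0$ and $\phi(p)=0$ (because $\tilde f(0)=0$). For $g\in\mathcal U$ the critical points of $g|_U$ are exactly the points of $S_g\cap U$, so $p$ is a critical value of $g|_U$ precisely when $p\in g(S_g\cap U)$; since $p\in V$, this is equivalent to $p\in g(S_g\cap U)\cap V$, a set whose $\phi$-image is the graph of $R_g$ by part~(3) of Proposition~\ref{truchazo}. Thus $p$ is a critical value of $g|_U$ if and only if the point $0=\phi(p)$ lies on that graph, that is, if and only if $R_g(0)=0$, which shows $\mathcal C=\mathcal F^{-1}(0)$. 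Moreover $g\mapsto R_g$ is differentiable (again by part~(3)) and evaluation at $0\in\R^{d-1}$ is a bounded linear functional on $C^1(\R^{d-1},\R)$, so $\mathcal F\colon\mathcal U\to\R$ is of class $C^1$.

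It remains to check that $0$ is a regular value of $\mathcal F$, that is, that $D\mathcal F_g\colon T_g\mathcal U\to\R$ is surjective for every $g\in\mathcal C$; in fact I will exhibit, at every $g\in\mathcal U$, a tangent direction along which $\mathcal F$ has derivative $1$. The idea is to translate the image vertically near $p$: fix a bump function $\beta\colon M\to[0,1]$ supported in $V$ and identically $1$ on a smaller neighborhood of $p$, put $e_d=(0,\dots,0,1)\in\R^d$, and for small $s$ define $g_s$ by $\phi(g_s(z))=\phi(g(z))+s\,\beta(g(z))\,e_d$ where $g(z)\in V$ and $g_s(z)=g(z)$ otherwise. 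Then $s\mapsto g_s$ is a $C^1$ curve in $C^2(M)$ with $g_0=g$, and $g_s\in\mathcal U$ for $|s|$ small. On the open set where $\beta\circ g\equiv1$ we have $g_s=D_s\circ g$ with $D_s=\phi^{-1}\circ T_s\circ\phi$ and $T_s(y,t)=(y,t+s)$; since $D_s$ is a diffeomorphism, near $x$ this gives $S_{g_s}=S_g$ and $g_s(S_{g_s})=D_s(g(S_g))$, whose $\phi$-image near $0$ is the graph of $R_g+s$. Therefore $R_{g_s}(0)=R_g(0)+s$, so $\mathcal F(g_s)=\mathcal F(g)+s$, and $D\mathcal F_g(\dot g)=1\neq0$ for the tangent vector $\dot g=\tfrac{d}{ds}\big|_{s=0}g_s$. (For the model map this is just $\tilde g_s(a)=(a_1,\dots,a_{d-1},a_d^2+s)$, $R_{\tilde g_s}\equiv s$.)

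With $\mathcal F$ of class $C^1$ and $0$ a regular value, the kernel of each $D\mathcal F_g$, $g\in\mathcal C$, is a closed hyperplane in $T_g\mathcal U$, which splits automatically because the range $\R$ is one-dimensional; the implicit function theorem then gives that $\mathcal C=\mathcal F^{-1}(0)$ is a codimension one submanifold of $\mathcal U$. The step I expect to demand the most care is the bookkeeping in the second paragraph: verifying that the perturbation $g_s$ really stays in $\mathcal U$ so that $R_{g_s}$ is defined, that the region $\{z:\beta(g(z))=1\}$ contains the part of $S_g\cap U$ relevant near $p$, and that $s\mapsto g_s$ is $C^1$ into $C^2(M)$ with the stated derivative — all routine, if slightly tedious, manipulations with the chart $\phi$ and a partition of unity. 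The rest is immediate from Proposition~\ref{truchazo}.
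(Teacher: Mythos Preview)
Your proof is correct and follows essentially the same route as the paper: both identify $\mathcal C=\mathcal F^{-1}(0)$ via Proposition~\ref{truchazo} and then show $0$ is a regular value of $\mathcal F$ by exhibiting an explicit transversal one-parameter family built with a bump function and a translation in the $e_d$-direction. The only cosmetic differences are that you translate in the target chart $\phi$ (post-composing $g$ with a local diffeomorphism) whereas the paper translates in the source chart $\psi$ (pre-composing the normal form $\qu$ with a shift), and you verify surjectivity of $D\mathcal F_g$ at every $g\in\mathcal U$ while the paper writes out the computation only at $g=f$.
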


\begin{proof} Let $D_1\subset D_2\subset \R^d$ two disks centered in $0$ and $\rho\colon \R^d\to \R$ a bump function such that $\rho(y)=0$ if $y\notin D_2$, $\rho(y)=1$ if $y\in D_1$ and $\rho(y)\in [0,1]$ otherwise. Consider the one parameter family $f_\mu$ defined by $f_\mu (y)= f(y)$ if $y\notin U$ and $f_\mu(y)=\phi^{-1}\circ q(\psi(y)+\mu\rho(\psi(y))e_d)$ if $y\in U$ (where $e_d=(0,\dots,0,1)\in\R^d$). Observe that $f_0=f$ and that for $\mu$ in a small neighborhood of $0$, $f_\mu\in{\cal U}$. 
    
\par The following shows that $D{\cal F}_f$ is onto. 
\[
	\left.D{\cal F}_f\left(\frac d{d\mu} f_\mu\right|_{\mu= 0}\right)=
	\left.\frac d{d\mu}{\cal F} (f_\mu)\right| _{\mu= 0}=
	\left.\frac d{d\mu}R_{f_\mu} (0)\right|_{\mu= 0}=1
\]
\par The last equality holds because $R_{f_\mu} (0)= \mu$. 
Therefore 0 is a regular value of ${\cal F} $ and ${\cal F}^{-1}(0)$ is a codimension one submanifold of ${\cal U}$. 

\par It remains to show that ${\cal C}={\cal F}^{-1} (0)$ since, by Proposition \ref{truchazo}, ${\cal F}$ is differentiable. 
By the definition of ${\cal F}$ we have that $g\in {\cal F}^{-1}(0)$ if and only if $R_g(0)=0$. And since $\phi (g (S_g\cap U))$ is the graph of $R_g$ we have that $0\in \phi (g (S_g\cap U))$ is equivalent to  $R_g(0)=0$. Then, since $\phi(p)=0$, we conclude $g\in {\cal F}^{-1}(0)$ if and only if $p$ is a critical value of $g|_U$.
\end{proof}

\par The following lemma has the same hypothesis and notation than the
previous one and its proof.

\begin{lema} There exist a neighborhodd of $f$, ${\cal U}_1\subset{\cal U}$ in $C^2(M)$ such that if we define ${\cal C}^+=\{ g\in{\cal U}_1: R_g(0)> 0\}$ and ${\cal C}^-=\{ g\in{\cal U}: R_g (0)< 0\}$ then $p$ is a regular value of $g|_{U}$ if $g\in {\cal C}^\pm$. Moreover, if $g\in{\cal C}^ +$ then $p\notin g(U)$ and if $g\in{\cal C}^- $ then $p\in g(U)$.\end{lema}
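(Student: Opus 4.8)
The plan is to treat the two assertions separately. The statement about regular values is almost immediate from Proposition \ref{truchazo}: since $\tilde g=\phi\circ g\circ\psi^{-1}$ is defined on all of $\psi(U)$, one has $g(U)\subset V$, hence $g(S_g\cap U)\subset V$, and $\phi(g(S_g\cap U))$ is exactly the graph of $R_g$. A point $z\in U$ is a critical point of $g$ with $g(z)=p$ precisely when $p\in g(S_g\cap U)$, i.e.\ when $\phi(p)=0$ lies on the graph of $R_g$, i.e.\ when $R_g(0)=0$; so every $g$ with $R_g(0)\ne0$ — in particular every $g\in\mathcal C^+\cup\mathcal C^-$ — has $p$ as a regular value of $g|_U$. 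The real content is the \textbf{moreover} part, for which I will show that near $p$ the image $g(U)$ lies, in the coordinate $\phi$, on one prescribed side of $\operatorname{graph}(R_g)$, uniformly for $g$ near $f$.

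First I would set up the local model and a uniform count of preimages. Shrinking $\mathcal U$ to a ball $\mathcal U_1\subset C^2(M)$, for $g\in\mathcal U_1$ the set $S_g\cap U$ is a codimension one submanifold, $g$ embeds it onto $g(S_g\cap U)$, and each of its points is a fold of type $(2,0)$; hence a point $y\in V$ close to $p$ off $\operatorname{graph}(R_g)$ has $0$ or $2$ preimages in $U$, while points of $\operatorname{graph}(R_g)$ close to $p$ have exactly one. Because $\tilde f^{-1}(0)=\{0\}$, for $y$ close to $p$ one has $f^{-1}(y)\cap U\subset U'$ for some fixed $U'\Subset U$, and this persists: after shrinking $\mathcal U_1$ we may assume $g^{-1}(y)\cap U\subset U'$ for all $g\in\mathcal U_1$ and all such $y$. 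Then $\#(g^{-1}(y)\cap U)$, for $y$ near $p$ off $\operatorname{graph}(R_g)$, equals the number of (regular) preimages in $U'$, which by the implicit function theorem is locally constant in $y$ and in $g$.

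Next I would fix the side and conclude. The graph of $R_g$ cuts a fixed neighborhood $V_0$ of $0=\phi(p)$ into $\Sigma_g^{+}=\{y\in V_0:\ y_d>R_g(\pi y)\}$ and $\Sigma_g^{-}=\{y\in V_0:\ y_d<R_g(\pi y)\}$, where $\pi\colon\R^d\to\R^{d-1}$ is the projection onto the first $d-1$ coordinates; this is legitimate because $R_g(0)$ stays small on $\mathcal U_1$. For $g=f$ one has $R_f\equiv0$ and $\tilde f(a)=(a_1,\dots,a_{d-1},a_d^2)$, so $\phi(f(U))\cap V_0=\overline{\Sigma_f^{+}}$ and the doubly--covered locus is $\Sigma_f^{+}$. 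By the local constancy above and connectedness of $\mathcal U_1$, the doubly--covered locus is $\Sigma_g^{+}$, not $\Sigma_g^{-}$, for every $g\in\mathcal U_1$; combined with the single preimage over $\operatorname{graph}(R_g)$ this yields $\phi(g(U))\cap V_0=\{y\in V_0:\ y_d\ge R_g(\pi y)\}$. Evaluating at $y=\phi(p)=0$ gives $p\in g(U)$ iff $R_g(0)\le0$, so $g\in\mathcal C^{+}$ forces $p\notin g(U)$ and $g\in\mathcal C^{-}$ (read as a subset of $\mathcal U_1$) forces $p\in g(U)$.

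The hard part will be this side--fixing step together with its uniformity: I must guarantee that as $g$ ranges over $\mathcal U_1$ the auxiliary objects $U'$, $V_0$ and the preimage sets do not degenerate, so that ``number of preimages of a point near $p$'' is genuinely a locally constant function of $g$ on a connected set — otherwise the side occupied by $g(U)$ could a priori flip. The remaining steps are routine bookkeeping on top of Proposition \ref{truchazo}.
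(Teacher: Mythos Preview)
Your argument is correct, and it reaches the same description of $\phi(g(U))$ near $0$ as the epigraph $\{y:\ y_d\ge R_g(\pi y)\}$ that the paper obtains, but by a different mechanism. You fix the $0/2$ preimage multiplicities on the two sides of $\operatorname{graph}(R_g)$ by a continuity/degree argument: trap all preimages of points near $p$ in a compact $U'\Subset U$, use the implicit function theorem to make the regular-preimage count locally constant jointly in $(g,y)$, and then propagate the side from $g=f$ across a connected $\mathcal U_1$. The paper instead exploits the $C^2$-closeness of $\tilde g$ to $\tilde f$ directly: since $\tilde f_d(a)=a_d^2$, for $g$ in a small enough $C^2$-neighborhood the restriction of $\tilde g_d$ to each vertical line $a_1,\dots,a_{d-1}=\text{const}$ still has a unique minimum, necessarily attained on the critical set, so the image of $\tilde g$ is exactly $\{a_d\ge R_g(a_1,\dots,a_{d-1})\}$; the two cases $R_g(0)\gtrless 0$ then read off immediately. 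The paper's route is shorter and sidesteps the uniformity bookkeeping you correctly flag as the delicate point; your route, on the other hand, uses only $C^1$ information once the fold structure from Proposition~\ref{truchazo} is in place, and would adapt more easily to folds not given in an explicit quadratic normal form.
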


\begin{proof} Consider for all $g\in{\cal U}$ the coordinates of $\tilde g$ given by $$\tilde g(a_1,\dots,a_d)=(\tilde g_1(a_1,\dots,a_d),\dots,\tilde g_n(a_1,\dots,a_d)).$$ Our assumptions imply that $\tilde f_d(a_1,\dots,a_d)=a_d^2$ and that if ${\cal U}_1\subset {\cal U}$ is small enough we have that for every $g\in {\cal U}_1$ the coordinate $\tilde g_d$ satisfies the following conditions: (1) if we restrict $\tilde g_d$ to the line determined by fixing $a_i$, for $i=1,\dots,n-1$, we have that $\tilde g_n$ has a minimum value that is reached at the graph of $R_g$; (2) the image of $\tilde g$ is $\{ (a_1,\dots,a_d): a_d\geq R_g(a_1,\dots,a_{d-1}) \}$.

\par Then if $g\in{\cal C}^+$ ($R_g (0)> 0$), 0 does not have preimage by $\tilde g$. And then $p$ does not have preimage in $U$, and that is why it is a regular value of $g|_U$.
\par Suppose $g\in{\cal C}^- $. We have that $R_g (0)< 0$ and 0 is a regular value having two preimages by $\tilde g$, and then $p\in g(U)$ is a regular value. \end{proof}

\begin{proof}[Proof of Theorem \ref{c+c-}] First, we can suppose that there exists a nieghborhood $U_1$ of the repellor fixed point of $f$, $p$, such that for every $g\in {\cal U}_1$ (the neighborhood of $f$ given by the previous lemma) there is a repellor fixed point of $g$, $p_g$, and $U_1\subset \W_g(p_g)$. Furthermore, conjugating with a diffeomorphism of $M$, near the identity map of $M$, we can suppose that $p_g=p$ for every $g\in {\cal U}_1$.

\par Consider a neighborhood ${\cal U}_2 \subset {\cal U}_1 $ of $f$ in $C^2(M)$ such that for every $g\in {\cal U}_2$ and every $y\in U$ (the neighborhood of $x$) there exists a regular sequence of preorbits of $y$ that converges to $p$. Then for every $g\in {\cal U}^-$ we have that $p$ has a regular preimage in $U$, by the previous lemma, and then it has a regular sequence of preorbits that converges to $p$. Then $p$ has a regular homoclinic orbit associated and it is close to the one associated to $f$. In case $g\in {\cal U}^+$ we have that $p$ does not have any preimage by $g$ in $U$, again by the previous lemma, and then there is no homoclinic orbit associated to $p$ close to the one of $f$. If $g\in {\cal C}$ we have that $p$ has a critical fold type preimage in $U$ and by the same arguments we have that $p$ has a homoclinic orbit associated to the one of $f$ that is critical.

\par For the last assertion of the theorem consider the one parameter family $f_\mu\in C^ 2 (M)$ defined in the proof of lemma \ref{l1graf}. We recall that 
\begin{equation}\label{eqderivada}
	\left.\frac d{ d\mu}R_{f_\mu} (0)\right|_{\mu= 0}\neq 0.
\end{equation}
We will prove that for every $\varepsilon>0$ there exists $\mu$, $|\mu|<\varepsilon$, such that there exists a critical periodic point for $f_\mu$ in $U$.

\par Let $G\colon\R^{d-1}\times \R\to\R^{d-1}\times \R$ defined by \[G(w,\mu)= (w, R_{f_\mu} (w)).\] 
The map $G$ is differentiable by Proposition \ref{truchazo}. Observe that $\phi^{-1}\circ G (w, \mu) $ is a critical value of $f_\mu$, and it has a fold type preimage in $U$, by $f_\mu$, that we call $c(w,\mu)$. By  equation (\ref{eqderivada}) we have that $DG_0$ is invertible and by the inverse function theorem $G$ is a local diffeomorphism. Observe that $G (0)= 0$. 

\par Let $K\times[-\epsilon,\epsilon]$ a neighborhood of $0\in \R^d$ and $L\subset V$ a neighborhood of $p$ both homeomorphic to compact balls of $\R^d$ such that $G\colon K\times[-\epsilon,\epsilon]\to \phi(L)$ is a diffeomorphism. 
Let $q\in L$ a point in the homoclinic orbit of $f$ and let $M\subset L$ a neighborhood of $q$ and $m\geq 0$ such that for every $\mu\in [-\epsilon,\epsilon] $, $U\subset f_\mu^ m (M) $ and $f_\mu^ m\colon M\to f_\mu^m(M)$ is a diffeomorphism. Let \[ H\colon K\times[-\epsilon,\epsilon]\to K \times[-\epsilon,\epsilon] \]   defined by 
\[ 
H (w,\mu)= G^{-1}\circ \phi\circ (f_\mu^{ m}| _M)^{- 1}\circ c (w, \mu) 
\] 
The map $c$ is continuous by the way the family $f_\mu$ was defined. Then, $H$  is  continuous since it is the composition of continuous functions. Furthermore its domain was chosen to be homeomorphic to a compact ball of $\R^d$. Then by Brower Theorem we have a fixed point $(z,u)$. We will prove that $\phi^{-1}\circ G(z,u)$ is a critical periodic point. Since $(z,u)$ is a fixed point for $H$ have that \[ G^{-1}\circ \phi\circ (f_\mu^{m}| _M)^{-1}\circ c (z, u)= (z,u) \] and then
\[ c(z,u)= f_\mu^ m\circ \phi^{-1}\circ G(z,u) \]

\par But, by the way the function $c$ was defined, $c(z,u)$ is a critical preimage of $\phi^{-1}\circ G(z,u)$ for $f_u$, that is $f_u\circ c(z,u)= \phi^{-1}\circ G(z,u)$. Then \[f_u^{m+1} (\phi^{-1}\circ G(z,u))= \phi^{-1}\circ G(z,u)\] Then we have the fold type periodic point. \end{proof}

\end{section}    

\begin{section}{One Dimensional Bifurcations} \label{dinuni}

We will consider a family of maps $f_\mu\colon \R\to\R$ such that for every $\mu\in\R$ the maps have a 
repellor fixed point at $p$. 
In order to prove Theorem \ref{lastoh} we need two lemmas.
\renewcommand{\W}{W^u}
\begin{lema}
\label{nopreimagenborde} 
Let $f\colon \R\to\R$ be a $C^1$ map and $p\in\R$ a repellor fixed point. 
Then if $p$ does not have any homoclinic orbit associated then $p$ has no preimage in $\partial \W(p)$. 
\end{lema}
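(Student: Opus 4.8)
The plan is to prove the contrapositive: assuming $f(x)=p$ for some $x\in\partial\W(p)$, I will produce a preimage of $p$ different from $p$ lying in $\W(p)$; by the criterion recalled in Section~1 (an associated homoclinic orbit exists exactly when $(f^{-1}(p)\setminus\{p\})\cap\W(p)\neq\emptyset$) this yields a homoclinic orbit, contradicting the hypothesis.

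First I would fix the geometry of $\W(p)$. As in Section~1 one may write $\W(p)=\bigcup_{n\ge0}f^n(U_0)$, where $U_0$ is a compact interval around $p$ so small that $f$ is expanding on it (possible since $|f'(p)|>1$ and $f\in C^1$); then $f(U_0)\supseteq U_0$, so this is an increasing union of compact intervals. Consequently $\W(p)$ is an interval $I$ with $p\in\interior I$, it satisfies $f(I)\subseteq I$, and — the technical heart — $\sup_{y\in I}f(y)=\sup I$ and $\inf_{y\in I}f(y)=\inf I$: indeed $\max_{y\in f^{n}(U_0)}f(y)=\max f^{n+1}(U_0)\le\sup_I f$ for every $n$, while $\max f^{n}(U_0)\nearrow\sup I$, and $f(I)\subseteq I$ gives the reverse inequality; the infimum is symmetric. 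Since $p\in\interior I$, the point $x\in\partial I$ is a finite endpoint of $I$ with $x\neq p$; I may assume $x=\sup I=:b$ (so $b$ is finite and $b>p$), the case $x=\inf I$ being symmetric.

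If $b\in I$, then $b\in(f^{-1}(p)\setminus\{p\})\cap\W(p)$ and we are done. So assume $b\notin I$. Suppose first $I$ is bounded, with $a=\inf I$. By the identities above $b=\sup I=\sup_I f=\max_{[a,b]}f$, and this maximum is attained neither at an interior point $y$ (that would put $y\in I$ with $f(y)=b\notin I$, against $f(I)\subseteq I$) nor at $b$ (as $f(b)=p\neq b$); hence it is attained at $a$, so $f(a)=b$, and therefore $a\notin I$ (else $b=f(a)\in f(I)\subseteq I$). Running the same reasoning for the minimum $a=\inf I=\min_{[a,b]}f$ — not at an interior point since $a\notin I$, not at $b$ since $f(b)=p\neq a$ — gives $f(a)=a$, whence $a=b$, which is absurd. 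Suppose instead $I$ is unbounded, i.e. $I=(-\infty,b)$. Since $f$ is continuous with $f(b)=p<b$ and $b=\sup_I f$ is not attained in $I$, this supremum can only be approached along arguments tending to $-\infty$, so there is $u<p$ with $f(u)>p$; and since $\inf_I f=\inf I=-\infty$ while $f$ is bounded on the compact interval $[p,b]$, there is also $v<p$ with $f(v)<p$. The intermediate value theorem, applied between $u$ and $v$ inside $(-\infty,p)\subseteq I$, yields $z\in I\setminus\{p\}$ with $f(z)=p$. In every case we have found $z\in\W(p)$, $z\neq p$, $f(z)=p$, as wanted.

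I expect the unbounded case to be the main obstacle: there one must locate a preimage of $p$ inside $\W(p)$ explicitly, and simultaneously exclude the trivial solution $z=p$, which forces one to combine the two extreme-value identities with the local behaviour of $f$ at $p$ and a compactness argument. Carrying out those extreme-value identities carefully is the other point needing attention, and it is precisely what makes the bounded case collapse at once.
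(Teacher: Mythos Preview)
Your proof is correct, and it takes a genuinely different route from the paper's. The paper argues directly: assuming there is no homoclinic orbit and $W^u(p)$ is bounded, it notes that $f$ maps $\clos(W^u(p))$ onto itself, so a boundary point $r\notin W^u(p)$ has a preimage that is again a boundary point not in $W^u(p)$; since there are at most two such points, $r$ is forced to be fixed or of period two, hence $f(r)\neq p$. The half-open and unbounded subcases are handled by short separate arguments (in the unbounded one the paper first rules out $f'(p)<0$). You instead prove the contrapositive: from the extreme-value identities $\sup_I f=\sup I$ and $\inf_I f=\inf I$ (which follow just from $W^u(p)=\bigcup_n f^n(U_0)$ being an increasing union of intervals), together with continuity and the intermediate value theorem, a boundary preimage of $p$ is shown to force either a contradiction (bounded case) or an interior preimage of $p$ (unbounded case). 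The paper's approach yields the extra structural fact that the boundary is a periodic orbit of $f$; your approach avoids the surjectivity-of-the-closure step and makes the bounded case collapse via a single max/min computation. One remark for clarity: your equality $\max_{[a,b]}f=\sup_I f$ is justified simply because $I$ is dense in $[a,b]$ and $f$ is continuous, so no separate control on $f(a)$ is needed there.
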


\begin{proof}  We will divide the proof according to the structure of $\W(p)$.
First suppose that $\W(p)$ is bounded. Let $r\in\partial \W(p)\setminus \W(p)$. 
It always holds that $f\colon \W(p)\to \W(p)$ is onto. 
Thus $f\colon\clos(\W(p))\to \clos(\W(p))$ is also onto since $f(\clos(\W(p))) $ is compact. 
Then $r$ has a preimage in $\clos(\W(p))$, suppose $f(s)=r$. Since $r\notin \W(p)$ we have that $s\notin \W(p)$.
Consider the following two cases.
\begin{enumerate} 
\item If $\W(p)$ is half-open then $s= r$. Therefore $r$ is a fixed point and not a preimage of $p$. 
\item Suppose that $\W(p)$ is an open interval. If $r= s$ then $r$ is not a preimage of $p$ as in the previous case. 
On the other hand, $\W(p)=(r,s)$ or $\W(p)=(s,r)$, and we have that $f (r)= s$ and $r$ is periodic of period 2. This is because $f(\W_\pm(p))= \W_\mp(p)$  since there is no preimages of $p$ in $\W(p)$ except for $p$, where $\W_{+}(p)=\{ x\in \W(p): x> p\} $ and  $\W_{-}(p)=\{ x\in \W(p): x< p\} $. 
Recall that $r$ is not a preimage of $p$. 
\end{enumerate} 

Now suppose that $\W(p)$ is not bounded. 
If $\W(p)=\R$ there is nothing to prove. 
Suppose that $\W(p)=  (r,+\infty)$ 
(the same goes for $\W(p)=(-\infty,r)$). 
The number $f'(p)$ cannot be negative because  $f([r,p])$ 
would have to be $[p,+\infty)$ that is not compact. 
Then $f'(p)>1$ and we proceed as before, 
$f\colon[r,p]\to[r,p]$ is onto and $r$ does not have preimage in $(r,p] $, therefore $f(r)=r$. Thus $r$ is not a preimage of $p$.
\end{proof}

\begin{lema}\label{estnooh} If $f$ is a point of continuity of $\W(p)$ and $f$ does not have any homoclinic orbit associated to $p$ then there exist a $C^1$-neighborhood $\cal V$ of $f$ such that every $g\in\cal V$ has no   homoclinic orbit associated to $p$. \end{lema}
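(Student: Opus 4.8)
The plan is to exploit the robustness of the structure of $\W(p)$ that was established in Lemma \ref{nopreimagenborde} together with the openness of the expanding condition at $p$. Since $f$ has a repellor fixed point at $p$, for every $g$ in a suitable $C^1$-neighborhood of $f$ the continuation $p$ (we may assume it is a fixed point for all $g$, conjugating as in the proof of Theorem \ref{c+c-}) remains a repellor, and there is a fixed radius $r_0>0$ with $B_{r_0}(p)\subset \W_g(p)$ for all such $g$. The unstable set $\W_g(p)$ is then an interval $I_g$ with $p\in\interior I_g$, and by hypothesis $f$ is a point of continuity of the transformation $\W$, so $\clos(I_g)$ converges to $\clos(I_f)$ in the space of closed intervals of $\overline\R$ as $g\to f$. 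Because $f$ has no homoclinic orbit associated to $p$, Lemma \ref{nopreimagenborde} tells us that $p$ has no preimage in $\partial\W(p)$; equivalently, the only preimage of $p$ inside $\clos(\W(p))$ is $p$ itself. The goal is to show this persists for $g$ near $f$.

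The key step is to localize all preimages of $p$ that could create a homoclinic orbit. A homoclinic orbit associated to $p$ exists for $g$ if and only if $p$ has a preimage in $\W_g(p)$ other than $p$, and since $\W_g(p)=\cup_{n\geq 0}g^n(B_{r_0}(p))$, any such preimage already lies in some $g^n(B_{r_0}(p))$, hence in $\clos(I_g)$. So it suffices to rule out preimages of $p$ in $\clos(I_g)\setminus\{p\}$. First I would isolate, for $f$, a small neighborhood $J$ of $p$ in which the only preimage of $p$ is $p$ itself (possible since $p$ is a repellor, so $f$ is locally injective near $p$); by $C^1$-closeness this persists: for $g$ near $f$, the only $g$-preimage of $p$ in $J$ is $p$. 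Then one must handle the complement $\clos(I_f)\setminus J$, which is a compact set (two closed subintervals, or possibly empty/infinite rays treated via $\overline\R$) on which $f$ takes the value $p$ nowhere — including, crucially, at the endpoints of $\clos(I_f)$, by Lemma \ref{nopreimagenborde}. Hence $|f - p|$ is bounded below by some $\delta>0$ on a compact neighborhood $Q$ of $\clos(I_f)\setminus J$ in $\overline\R$. By continuity of $\W$ at $f$, for $g$ close enough we have $\clos(I_g)\setminus J\subset Q$, and by $C^0$-closeness of $g$ to $f$ on $Q$ we get $|g-p|>\delta/2>0$ there, so $g$ has no preimage of $p$ in $\clos(I_g)\setminus J$ either. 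Combining the two regions, $p$ is the only $g$-preimage of $p$ in $\clos(I_g)\supset\W_g(p)$, so $g$ has no homoclinic orbit associated to $p$.

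The main obstacle will be the unbounded cases, where $\W(p)$ is a ray or all of $\R$ and $\partial\W(p)$ involves the points $\pm\infty$ of $\overline\R$; one must make sure the compactness arguments go through in $\overline\R^2$ with its natural topology, in particular that "$|g-p|$ bounded below away from $p$" is the right statement near an infinite endpoint, and that continuity of $W$ genuinely forces $\clos(I_g)$ to stay inside a prescribed neighborhood of $\clos(I_f)$ there. A secondary technical point is checking that $g\mapsto$ (the property of having $B_{r_0}(p)\subset\W_g(p)$) is robust, which follows from the uniform expansion estimate for the continuation of $p$ but should be stated carefully. Once these points are handled, the conclusion is immediate: the neighborhood $\cal V$ is the intersection of the $C^1$-neighborhood on which $p$ stays a repellor with $B_{r_0}(p)\subset\W_g(p)$, the $C^1$-neighborhood from local injectivity near $p$, and the $C^0$-neighborhood coming from the bound on $Q$, all further shrunk so that continuity of $W$ places $\clos(I_g)\setminus J$ inside $Q$.
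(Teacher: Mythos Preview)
Your argument is correct and follows essentially the same route as the paper's: use Lemma~\ref{nopreimagenborde} to conclude that $\clos(\W(p))\cap f^{-1}(p)=\{p\}$, then combine the continuity of $W$ at $f$ (to control $\clos(\W_g(p))$) with $C^0$/$C^1$-closeness (to keep $g^{-1}(p)$ away from that set except at $p$). The paper phrases the separation as choosing disjoint neighborhoods $U_1\supset f^{-1}(p)\setminus\{p\}$ and $U_2\supset\clos(\W(p))$ rather than your near-$p$/away-from-$p$ split with the lower bound on $|f-p|$, but the content is the same, and your version is in fact more careful about the compactness issues in the unbounded case.
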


\begin{proof} 
Since $p$ is a repellor fixed point we have that for every 
$C^1$-close map there is a repellor fixed point close to $p$.  
Without loss of generality we suppose that this fixed point is $p$.

We know that $\W(p)\cap f^{-1} (p)=\{ p\}$ and 
by Lemma \ref{nopreimagenborde} $\clos(\W(p))\cap f^{-1} (p)=\{ p\}$. 
Let $U_1, U_2\subset \R$ be disjoint neighborhoods of 
$f^{-1} (p)\setminus p$ and $\clos(\W(p))$ respectively. 
Let ${\cal U} _1$ a $C^ 1$-neighborhood of $f$ such that for 
every $g\in{\cal U} _1$,  $g(\clos(\W_g(p)))\subset U_1$ 
($f$ is a point of continuity for $\W(p)$). 
Let ${\cal U} _2$ a $C^ 0$-neighborhood of $f$ such that 
every $g\in{\cal U} _2$ doesn't have preimages of $p$ 
in $U_2$, excepting $p$. 
Then ${\cal V}={\cal U}  _1\cap{\cal U} _2$ satisfies the thesis of the lemma. 
\end{proof}

\begin{proof}[Proof of Theorem \ref{lastoh}] 
Suppose that $f_{\mu_0}$ is a point of continuity 
of $\W(p)$. If $f_{\mu_0} $ has no homoclinic 
orbit associated to $p$ then by Lemma \ref{estnooh} there 
is a $C^1$-neighborhood ${\cal V} $ of  $f$ such 
that every $g\in{\cal V}$ has no homoclinic orbit associated to $p$. 
Since the map $F\colon\R\to C^1(\R)$, 
given by $F(\mu)=f_\mu$ is continuous, 
then $F^{-1} ({\cal V}) $ is a neighborhood of $\mu_0$ and 
then for every   $\mu\in F^{-1}({\cal V}) $, $f_\mu$ has 
no homoclinic orbit associated to $p$, which is a contradiction. 
This proves that $f_{\mu_0}$ has homoclinic orbits. 
The condition: $f_{\mu_0}$ has a regular homoclinic orbit 
associated to $p$, is an open condition, so again we reach a 
contradiction. Then $f_{\mu_0}$ has at least one 
critical homoclinic orbit associated to $p$ and has not regular homoclinic 
orbits.
\end{proof}

\end{section}

\begin{section}{A Two Dimensional Bifurcation}
\renewcommand{\W}{W^u}
Let us give an example of a one parameter family of
 maps $f_\mu\colon\R^2\to\R^2$ such that the bifurcation 
map $f_{\mu_0}$ has no homoclinic orbit associated to the 
repellor fixed point $p=0$ and the unstable set of $p$ is 
the same for every value of the parameter $\mu$. 
The construction of this example consist on three parts. 
(1) We define a set of maps ${\cal U}\subset C^1(\R^2)$ 
where the one parameter family will be found. 
(2) We show that $\W(p)$ does not depend on $\mu$.  
(3) Construct the one parameter family with the desired properties.

\par Let $S^1=\{(x,y)\in \R^2:x^2+y^2=1\}$ and $j \colon S^1\to S^1$ be a diffeomorphism such 
that $I=S^1\cap\{(x,y)\in\R^2: x> 1/ 2\} $ is a  wandering interval of $j$. Let $g\colon\R^+\cup \{ 0\}\to\R^+\cup\{ 0\}$ 
differentiable, such that $0$ is a repellor fixed point and $1$ is an attractor. 
Let $R\colon\R^ 2\to\R^ 2$ the map defined in polar coordinates by $R (\rho,\theta)= (g (\rho), j (\theta)) $. 
This map has a repellor fixed point at $0$ and $S^1$ is an invariant attractor. 
If we define $D=\{(x, y)\in\R^ 2: x^ 2+ y^ 2< 1\} $ then $R (D)= D$. 
Let $\hat I\subset D$ the angle with vertex in the origin generated by $I$. 
In this way $I\setminus \{0\}$ is a wandering set for $R$. The fact that $R$ may not be differentiable at $0$ will be considered later.

\par Let $F\colon\R^ 2\to\R^ 2$ a fold type map such that:
\begin{itemize} \item $F (x, y)= (x, y) $ if $x\leq 1/ 2$
\item $F (D)\subset D$
\item The straight line $x= 3/ 4$ is the set of critical points of $F$ and $F(3/4,y)=(3/4,y)$ if $(3/4,y)\in D$. 
\end{itemize}

\par Let $A= D\cap\{ x> 3/ 4\}$ 
and $U$ a neighborhood of $0$ such that $U\subset R (U) $ and $R (U) $ is on the left of the straight line $x= 1/ 2$. Also suppose that $F (A)\subset U\cup\hat I$. We can change $f$ in $U$ so that it gets differentiability at $p=0$. Let ${\cal U}\subset C^1(\R^2)$ be the set of 
maps $f=R\circ F$ where $R$ and $F$ have the previous properties. 

\begin{prop}\label{unsconst} 
For every map $f\in{\cal U}$ we have that $\W(p)= D\setminus\left (\cup_{ n\geq 1} R^ n (A)\right)$ \end{prop}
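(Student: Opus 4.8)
The plan is to compute $W^u(p)$ from the description recalled in the introduction. First I would work inside the disk $D$: since $F(D)\subseteq D$ and $R(D)=D$ one has $f(D)\subseteq D$, while $p=0\in D$ is a repelling fixed point of $f$; hence, after shrinking $U$ so that $\bigcap_{n\le 0}f^n(U)=\{p\}$, one has $W^u(p)=\bigcup_{n\ge 0}f^n(U)$, in particular $W^u(p)\subseteq D$. Write $\Omega=\bigcup_{n\ge 1}R^n(A)$, so the claim is $W^u(p)=D\setminus\Omega$. The key observation is that $\Omega\subseteq\{x<1/2\}$, so that $F$ is the identity near $\Omega$: a point of $A=D\cap\{x>3/4\}$ has $\rho\cos\theta>3/4>1/2$, hence $\rho\in(3/4,1)$ and argument in the arc spanned by $I=S^1\cap\{x>1/2\}$, so $A\subseteq\hat I$ and $R^n(A)\subseteq R^n(\hat I)$, the latter being the sector spanned by $j^n(I)$; since $I$ is wandering for $j$, $j^n(I)\cap I=\emptyset$, i.e. $j^n(I)\subseteq S^1\setminus I=\{x\le 1/2\}$ for $n\ge 1$, whence $R^n(\hat I)\subseteq\{x<1/2\}$ and thus $\Omega\subseteq\{x<1/2\}$. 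Applying the injective map $R^{-1}|_D$ the same computation shows that the sets $R^n(A)$, $n\ge 0$, are pairwise disjoint and that $\hat I\setminus\{0\}$ is wandering for $R$; also $R(\Omega)\subseteq\Omega$, and $\Omega\cap U=\emptyset$ for $U$ small.

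For the inclusion $W^u(p)\subseteq D\setminus\Omega$ I would use the fold structure of $F$ (cf. Proposition~\ref{truchazo} and \cite{GG}): $F(D)\subseteq D\setminus A$, so a point $z\in D$ has an $f$-preimage in $D$ if and only if $R^{-1}(z)\cap D\notin A$, i.e. if and only if $z\notin R(A)$. It then suffices to prove $f^{-1}(\Omega)\cap D\subseteq\Omega$, for then $f(D\setminus\Omega)\subseteq D\setminus\Omega$ and, since $U\subseteq D\setminus\Omega$, $f^n(U)\subseteq D\setminus\Omega$ for all $n$. So let $w\in D$ with $R(F(w))\in R^n(A)$, $n\ge 1$. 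By injectivity of $R$ on $D$, $F(w)\in R^{n-1}(A)$; for $n=1$ this contradicts $F(D)\cap A=\emptyset$, so $n\ge 2$ and $F(w)\in R^{n-1}(A)\subseteq\Omega\subseteq\{x<1/2\}$. If $w\in\{x\le 1/2\}$ then $F(w)=w\in\Omega$; $w$ cannot lie in $\{1/2<x<3/4\}\cap D$ (there $F$ has first coordinate $>1/2$), nor in $A$ (there $F(w)\in F(A)\subseteq U\cup\hat I$, while $F(w)\in R^{n-1}(A)$ misses $U$ by its radius and, lying in $R^{n-1}(\hat I)$ with $n-1\ge 1$, misses $\hat I$). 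Hence $w\in\Omega$.

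For the reverse inclusion, fix $z\in D\setminus\Omega$, $z\ne p$; I would build an $f$-preorbit $w_0=z,w_1,w_2,\dots$ converging to $p$, so that $w_k\in U$ eventually and $z=f^k(w_k)\in f^k(U)\subseteq W^u(p)$. Given $w_k\in D\setminus\Omega$: since $w_k\notin\Omega\supseteq R(A)$, the point $\zeta_k=R^{-1}(w_k)\cap D$ lies in $D\setminus A$, hence — $F$ restricting to a homeomorphism of $D\setminus A$ onto itself that is the identity on $\{x\le 1/2\}\cap D$ — $\zeta_k$ has a preimage $w_{k+1}\in D\setminus A$ under $F$, and $f(w_{k+1})=R(F(w_{k+1}))=R(\zeta_k)=w_k$. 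Because $\Omega\subseteq\{x<1/2\}$ is pointwise fixed by $F$ and $R(\Omega)\subseteq\Omega$, one checks $w_{k+1}\notin\Omega$, so the induction continues. Finally $\rho(w_{k+1})<\rho(w_k)$: the passage $w_k\mapsto\zeta_k$ contracts the radius by $g^{-1}$ (as $0$ is repelling for $g$) and the passage $\zeta_k\mapsto w_{k+1}$ does not increase it; since $0$ is an attracting fixed point of $g^{-1}$ on $(0,1)$, $\rho(w_k)\to 0$, i.e. $w_k\to p$.

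The hard part is this last step: guaranteeing that the selected preorbit genuinely converges to $p$. Since $F$ is not the identity on the annulus $\{1/2<x<3/4\}$, the radial estimate used there — that passing from $\zeta_k$ to its $F$-preimage $w_{k+1}$ in $D\setminus A$ does not increase $\rho$ — is not forced by the abstract properties of $R$ and $F$ alone, and must be built into the construction of the family $\mathcal U$ by choosing the fold $F$ compatibly with the radial contraction of $R^{-1}$; this is the one place where the concrete geometry of the construction, and not just the listed properties, is used. Everything else is driven by the containment $\Omega\subseteq\{x<1/2\}$, which confines all the nontrivial behaviour of $F$ away from $\Omega$ and is what makes both inclusions go through.
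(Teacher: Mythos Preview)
Your argument is sound in outline and, for the inclusion $W^u(p)\subset D\setminus\Omega$, is actually more explicit than the paper's: you prove the forward $f$-invariance of $D\setminus\Omega$ by a clean case analysis, whereas the paper just observes that $R(A)$ has no $f$-preimage in $D$ (because $F(D)\cap A=\emptyset$) and then asserts that $R^n(A)\cap W^u(p)=\emptyset$ for all $n\ge 1$ follows.

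The real difference is in the reverse inclusion, and here the paper's route sidesteps what you call the ``hard part''. Rather than building a generic $f$-preorbit and then needing the radial estimate on $F^{-1}$, the paper uses the $R$-preorbit directly. The point is that the wandering hypothesis on $I$ gives $j^{-n}(I)\subset S^1\setminus I\subset\{x\le 1/2\}$ for every $n\ge 1$; hence for $z\in\hat I$ (and likewise for $z\in D\setminus\bigcup_{n\in\Z}R^n(\hat I)$) the entire sequence $\{R^{-n}(z)\}_{n\ge 1}$ lies in the half-disk $\{x\le 1/2\}$, where $F$ is the identity. Thus $f(R^{-n}(z))=R\bigl(F(R^{-n}(z))\bigr)=R^{-(n-1)}(z)$ for all $n\ge 1$: the $R$-preorbit \emph{is} an $f$-preorbit, and it converges to $p$ automatically because $g^{-1}$ contracts the radius. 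No compatibility condition between the fold $F$ and the radial coordinate is required.

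So the extra geometric hypothesis you flag --- that $F^{-1}$ on $D\setminus A$ does not increase $\rho$ --- is not needed once you exploit the wandering structure of $\hat I$ fully: it guarantees that the backward orbit meets the region $\{x>1/2\}$ (where $F\neq\mathrm{id}$) at most at the very first step, and never thereafter. Your step-by-step construction is correct but works harder than necessary; the paper's shortcut is to recognize that away from a single visit to $\hat I$ the dynamics of $f$ and $R$ coincide on backward orbits.
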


\begin{proof} Since $F (D)\subset D$ and $R (D)= D$ we have 
that $f (D)\subset D$ and therefore $\W(p)\subset D$.     
Since the interval $I$ is wandering by $R$ so is the 
angle $\hat I\setminus \{p\}$. 
Then $R^{-n} (\hat I)\subset  f^{-n} (\hat I) $, 
for every $n\geq 0$, 
this is because $F$ is the identity map in $(x,y)$ if $x<1/2$, 
and then $\hat I\subset \W(p)$. 
Also $R^{-n} (\hat I) \subset \W(p)$. 

\par On the other hand $R^n (A) \cap\W(p)=\emptyset$, 
for $n>1$, since $R (A)\cap\W=\emptyset$. 
This is because $R(A)$ does not have any preimage for  $f$ in $D$. 
The points in $D\setminus \cup_{n\in \Z} R^n(\hat I)$ 
have a sequence of preimages by $R$ that converges to $p$, and it 
is easy to show that it is also a sequence of preimages by $f$. 
Then $D\setminus \cup_{n\in \Z} R^n(\hat I)$ is contained in $\W(p)$. 
\end{proof}

\par Now define a family of functions $F_\mu$ so that every one 
coincides on the left of the straight line $x= 3/ 4$ 
satisfying:
\begin{itemize} 
\item $p\notin F_\mu (A) $ for every $\mu< 0$,
\item $p\in F_\mu(A)$ for every $\mu> 0$ and
\item $F_0 (1,0)= p$.
\end{itemize}

In this way, for any negative value of $\mu$ the map $f_\mu$ has no homoclinic orbit associated to $p$ and for positive ones the map has a regular homoclinic orbit associated to $p$. For $\mu=0$ there is no homoclinic orbit associated to $p$. Also, by the previous proposition the unstable set of $p$ is the same for every $\mu$. 

\end{section}

\end{document}